\documentclass[11pt,reqno]{amsart}
\usepackage[margin= 2.0cm]{geometry}
\usepackage{algorithm}
\usepackage{algpseudocode}
\usepackage{setspace}
\usepackage{amsmath}
\usepackage{amsfonts}
\usepackage{amssymb}
\usepackage{amsthm}
\usepackage{mathtools} % An improvement of amsmath
\usepackage{latexsym}
\usepackage{enumerate}
\usepackage{cancel}
\usepackage{ragged2e}
\usepackage{blindtext}
\usepackage{cases}
\usepackage{empheq}
\usepackage{multicol}
\usepackage{placeins} % Allows float barriers for spacing

\usepackage{wrapfig}
\usepackage{caption}

\usepackage{subfigure}
\usepackage{float}

\usepackage{color}

\usepackage[color=white,linecolor=black]{todonotes}
\usepackage{mathrsfs}
\usepackage{fontenc} %T1 font encoding
\usepackage{inputenc} %UTF-8 support
\usepackage{enumitem}
\usepackage{verbatim}
\numberwithin{equation}{section}
\theoremstyle{plain}
\newtheorem{theorem}{Theorem}[section]
\newtheorem{proposition}[theorem]{Proposition}

\theoremstyle{definition}
\newtheorem{definition}[theorem]{Definition}

\theoremstyle{remark}
\newtheorem{remark}[theorem]{Remark}

\theoremstyle{remark}
\newtheorem*{note}{Note}
\usepackage[square,comma,numbers,sort]{natbib}
\usepackage[colorlinks=true, pdfborder={ 0 0 0}]{hyperref}
\hypersetup{urlcolor=blue, citecolor=red}
\usepackage{url}
\makeatletter
\renewcommand{\section}{\@startsection{section}{1}{0pt}%
  {1.5ex plus .2ex minus .2ex}%
  {1.0ex plus .2ex}%
  {\normalfont\large\bfseries\raggedright}}
\renewcommand{\subsection}{\@startsection{subsection}{2}{0pt}%
  {1.25ex plus .2ex minus .2ex}%
  {1.0ex plus .2ex}%
  {\normalfont\normalsize\bfseries\raggedright}}
\makeatother

\allowdisplaybreaks
\makeatletter
\def\abstractname{Abstract}
\renewenvironment{abstract}{%
  \begin{center}
    \textbf{\abstractname}
  \end{center}
}{}
\makeatother

\usepackage{xcolor}
\makeatletter
\def\@settitle{%
  \begin{center}%
    \normalfont\Large\bfseries
    \@title\par
  \end{center}%
}
\makeatother

\title[Weak solutions and higher regularity]{
 Existence of Weak Solutions and Higher-Order Regularity for a Heat-Wave Fluid-Structure Interaction System on a Periodic Strip
}
\subjclass[2020]{35K05, 35L05, 74F10, 35B65}

\keywords{
Global weak solutions;
Heat-wave interaction;
Fluid-structure interaction;
Higher-order regularity.
}
 \author[M. M. Rahman]{Mohammad Mahabubur Rahman}
\address[Mohammad Mahabubur Rahman]{School of Mathematical and Statistical Sciences, Clemson University, Clemson, SC 29634, USA}
\email[Mohammad Mahabubur Rahman]{rahman6@clemson.edu}
\begin{document}
\maketitle
\begin{abstract}
In this paper, we study a heat-wave fluid-structure interaction system
posed on a periodic strip geometry and investigate whether higher-order
$H^s$-estimates for arbitrarily large values of $s$ can be established for the
coupled system despite the inherent mismatch of parabolic and hyperbolic regularity.  We establish global existence of
weak solutions and Sobolev regularity up to the $H^3$-level, which, to
the best of our knowledge, is the highest regularity currently established
for this system. We further show that, even for arbitrarily regular initial
data, neither the semigroup approach through higher-order generator domains
$D(A^k)$ with $k\in\mathbb{N}$ nor the PDE-based tangential-normal recovery procedure yields
closed estimates beyond $H^3$ due to the complexities of the coupled PDE
system.

\end{abstract}
\section{Introduction}

\noindent Fluid-structure interaction (FSI) systems arise in a broad range of applications including hemodynamics, aeroelasticity, biomechanics, microfluidics, geophysical flows, and engineering systems involving compliant structures interacting with fluids. Mathematically, these systems couple fluid and structural equations of different types, most commonly parabolic and hyperbolic equations, through interface trace and normal coupling conditions. The interaction across the interface creates analytical difficulties that are absent in the uncoupled equations because the fluid and structure possess fundamentally different regularity mechanisms. In moving-boundary models, these challenges are further compounded by evolving geometries, nonlinear ALE transformations, free-boundary compactness issues, geometric nonlinearities, and possible self-contact phenomena.

\medskip

\noindent The mathematical theory of fluid-structure interaction has undergone substantial development over the past two decades. Foundational results on the existence of weak solutions for nonlinear fluid-structure interaction systems were established in
\cite{barbu2007existence,MuhaCanic2013},
and were subsequently extended to more general shell and three-dimensional structural models in
\cite{MuhaSchwarzacher2022, bociu2022weak, CanicMuhaTawri2024,BrandtCanicMuha2025}. Related developments for fluid-rigid body interactions, compressible models, viscoelastic systems, and contact problems can be found in
\cite{chemetov2019weak, MuhaNecasovaRadosevic2023, bociu2022weak,KampschulteMuhaTrifunovic2023,BukalKukavicaLiMuha2025}.

\medskip

\noindent Beyond the existence of weak solutions, strong solvability and higher-order regularity for coupled fluid-structure systems have also received considerable attention. In particular, representative contributions are given by
\cite{mitra2018local, kukavica2009strong,Lequeurre2011},
where strong solutions and higher regularity were established for several classes of coupled fluid-structure interaction problems.

\medskip

\noindent Alongside these developments,  a closely related direction concerns fixed-domain fluid-structure interaction systems, where the principal analytical difficulty is no longer the motion of the interface but rather the coupling of parabolic and hyperbolic equations through interface trace and normal coupling conditions. This class includes coupled heat-wave systems and Stokes-Lam\'e models, which provide fundamental prototypes for studying parabolic-hyperbolic interactions. Foundational contributions include
\cite{AvalosTriggiani2007,AvalosLasieckaTriggiani2008,AvalosTriggiani2009,ZhangZuazua2007}. These works established semigroup well-posedness, characterized the associated generators, and investigated qualitative properties such as long-time behavior for coupled heat-wave and Stokes-Lam\'e systems.

\medskip

\noindent Subsequent developments have focused primarily on stabilization, decay estimates, observability, interface control,  $H^2$-estimates, and regularizing effects for coupled parabolic-hyperbolic and and fluid-structure interaction  systems
\cite{AvalosLasieckaTriggiani2008, AlbanoTataru2000,Duyckaerts2006,Zuazua2001,avalos2015rational,avalos2016heat,peralta2018interface}. In particular, Muha \cite{muha2015note} established optimal Sobolev regularity together with regularizing effects for a one-dimensional coupled heat-wave system. These contributions provide a comprehensive understanding of the well-posedness and qualitative behavior of coupled parabolic-hyperbolic systems. However, they do not establish higher-order Sobolev regularity for multidimensional coupled heat-wave fluid-structure interaction systems.
\medskip

\noindent Having reviewed the recent developments in fluid-structure interaction, we now consider the coupled heat-wave system studied in the present work.

\begin{subequations}\label{1.0}
\begin{align}
u_t-\Delta u&=0
&&\text{in }(0,T)\times\Omega_f,
\label{1.0a}
\\
w_{tt}-\Delta w+w&=0
&&\text{in }(0,T)\times\Omega_s,
\label{1.0b}
\\
u&=0
&&\text{on }(0,T)\times\Gamma_f,
\label{1.0c}
\\
w&=0
&&\text{on }(0,T)\times\Gamma_{s,0},
\label{1.0d}
\\
u&=w_t
&&\text{on }(0,T)\times\Gamma_s,
\label{1.0e}
\\
\partial_\nu u&=\partial_\nu w
&&\text{on }(0,T)\times\Gamma_s,
\label{1.0f}
\\
u(0)&=u_0
&&\text{in }\Omega_f,
\nonumber
\\
w(0)&=w_0,
\qquad
w_t(0)=w_1
&&\text{in }\Omega_s.
\label{1.0g}
\end{align}
\end{subequations}
In \eqref{1.0}, the unknown
$
u=[u_1(t,x), u_2(t,x)]
$
denotes the heat component in the fluid region $\Omega_f$, while
$
w=[w_1(t,x), w_2(t,x)]
$
denotes the structural displacement in the structure region $\Omega_s$. The
interaction between the two components occurs through the coupling conditions
\eqref{1.0e}-\eqref{1.0f}. We  also note that all Sobolev and Lebesgue spaces pertaining to $u$, $w$, and $w_t$
are in fact $(H^s)^n$ and $(L^2)^n$, respectively, with $n=2$, but we
omit the exponent $n$ for the sake of simplicity.

\noindent \textbf{Geometry for the heat-wave fluid- structure interaction system \eqref{1.0}.}
\noindent Let $(x_1,x_2)\in\mathbb T_L^1\times(0,h_2)$ and define the structure and fluid subdomains by
$
\Omega_s := \mathbb T_L^1\times(0,h_1),\;
\Omega_f := \mathbb T_L^1\times(h_1,h_2),
$
with flat interface
$
\Gamma_s := \mathbb T_L^1\times\{x_2=h_1\},
$
and outer fluid boundary
$
\Gamma_f := \mathbb T_L^1\times\{x_2=h_2\}.
$
\begin{center}
\includegraphics[scale=0.60]{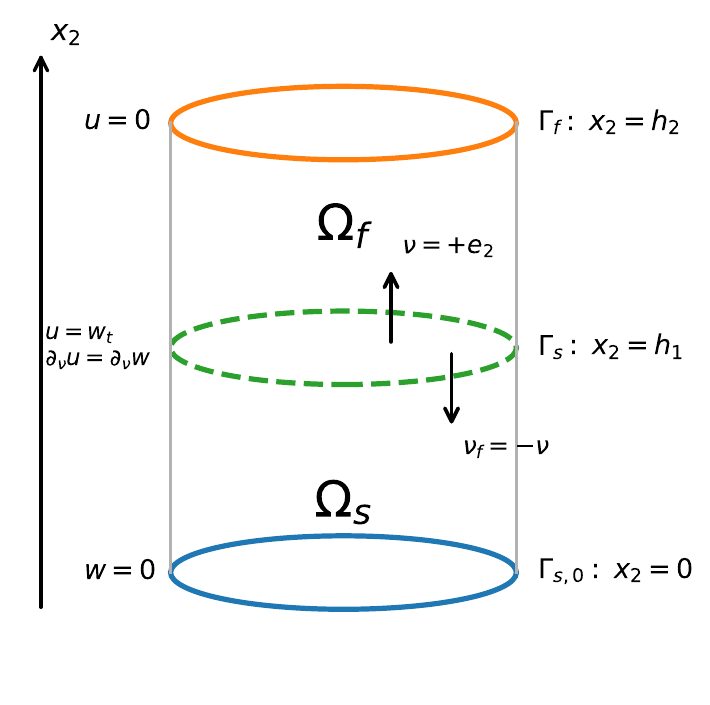}

\textbf{Figure: Geometry of the FSI Domain}
\end{center}
 The lower boundary of the structure is
$
\Gamma_{s,0} := \mathbb T_L^1\times\{x_2=0\},
$
so that
$
\partial\Omega_s=\Gamma_{s,0}\cup\Gamma_s,
\;
\partial\Omega_f=\Gamma_s\cup\Gamma_f.
$ The unit normal on $\Gamma_s$ is chosen as
$\nu=e_2,$
pointing from $\Omega_s$ into $\Omega_f$. Consequently, the outward unit normal of $\Omega_f$ on $\Gamma_s$ is $-\nu$.
We impose the homogeneous boundary conditions
$$
u=0 \quad \text{on } (0,T)\times\Gamma_f,
\qquad
w=0 \quad \text{on } (0,T)\times\Gamma_{s,0}.
$$
Consequently, boundary integrals arising from integration by parts vanish on
$\Gamma_f$ and $\Gamma_{s,0}$, and  contributions on $\Gamma_s$
remain.
\medskip

\noindent The model is posed on a periodic strip geometry with a flat interface. Although this setting eliminates many of the geometric difficulties encountered in moving-boundary fluid-structure interaction problems, the central analytical difficulty remains. The two interface conditions couple the heat equation \eqref{1.0a} and the wave equation \eqref{1.0b}, so that higher-order $H^s$-estimates require the regularity of the fluid and structure to be established simultaneously. Consequently, every increase in Sobolev order requires higher-order spatial derivatives of $u$, higher-order spatial and temporal derivatives of $w$, and repeated differentiation of the interface conditions \eqref{1.0e}-\eqref{1.0f}.
\medskip

\noindent Having described the present geometry and the structure of the coupled heat-wave PDE  system\eqref{1.0}, we record the following technical observation.

\begin{remark}
In the present flat geometry
$
\Gamma_s=\mathbb T_L^1\times\{x_2=h_1\},
$
the unit normal and tangential vectors are constant. Consequently,
tangential differentiation does not generate geometric commutator terms
associated with curvature, variable normals, or coordinate changes.

\medskip 

\noindent  More precisely, the tangential differential operator
$
D^\alpha:=\partial_{x_1}^\alpha
$
commutes exactly with both the Laplacian and the normal derivative. Since
$
\Delta=\partial_{x_1}^2+\partial_{x_2}^2
$
and
$
\partial_\nu=\pm\partial_{x_2},
$
we have
$$
[D^\alpha,\Delta]=0,
\qquad
[D^\alpha,\partial_\nu]=0,
$$
for every integer $\alpha \in \mathbb{N}_0$.

\noindent Consequently, tangential differentiation of \eqref{1.0} produces no commutator terms either in the interior equations or in the trace and normal derivative relations on $\Gamma_s$. In particular,
$$
D^\alpha u
=
D^\alpha w_t,
\qquad
\partial_\nu D^\alpha u
=
\partial_\nu D^\alpha w
\quad
\text{on }\Gamma_s.
$$
\end{remark}
\noindent   We now turn to the main regularity question addressed in this paper:
\textit{Can  higher-order $H^s$-estimates be established for the  heat-wave fluid-structure interaction system despite the inherent mismatch between the regularity scales of the parabolic and hyperbolic components?} We show that the answer is affirmative up to the $H^3$-level, which, to the best of our knowledge, is the highest Sobolev regularity currently established for this coupled system. Furthermore, we show that, even under arbitrarily high regularity assumptions on the initial data, neither the semigroup approach based on higher-order generator domains $D(A^k)$ nor the PDE-based tangential-normal recovery procedure yields closed estimates beyond the $H^3$-level.
\medskip

\noindent \textbf{Plan of the paper.} The remainder of the paper is organized as follows. Section~2 introduces the functional spaces, notation, and the definition of the generator together with its domain.  Section~3 investigates
the structural obstructions to establishing  $H^s$-estimates
for the coupled heat-wave fluid-structure interaction system by analyzing the
recursive tangential-normal recovery procedure and the higher-order
generator-domain approach. Section~4 presents the main results together with
their proofs. In particular, we establish the global existence of weak
solutions through the Galerkin approximation, derive the corresponding uniform
energy estimates, carry out the passage to the limit, establish the time-derivative estimates and recover the weak formulation, prove weak continuity
of the energy variable together with the attainment of the prescribed initial
data, and finally establish the higher-order regularity of the coupled system
up to the $H^3$-level. 
\medskip

\noindent Throughout the paper, we use the following spaces and notation unless otherwise specified.
\section{Spaces and notation}
\noindent We define
$$
V_f
:=
\{
v\in H^1(\Omega_f):
v=0 \text{ on }\Gamma_f
\},
\qquad
V_s
:=
\left\{
v\in H^1(\Omega_s):
v=0 \text{ on }\Gamma_{s,0}
\right\}.
$$
To incorporate the trace coupling condition on 
$\Gamma_s$, we introduce the coupled space
$$
\mathcal V
:=\{
(\varphi_f, \varphi_s)\in V_f\times V_s:
\varphi_f\rvert_{\Gamma_s}
=
\varphi_s\rvert_{\Gamma_s}
\}.
$$
The space $\mathcal V$ is equipped with the norm
$$
\lVert(\varphi_f, \varphi_s)\rVert_{\mathcal V}^2
:=
\lVert\varphi_f\rVert_{H^1(\Omega_f)}^2
+
\lVert\varphi_s\rVert_{H^1(\Omega_s)}^2.
$$
Its dual space is denoted by $\mathcal V'$.
The natural energy space for the heat-wave fluid-structure interaction system \eqref{1.0} is
$$
\mathbb H
:=
L^2(\Omega_f)
\times
V_s
\times
L^2(\Omega_s),
$$
endowed with the norm
$$
\lVert(u,w,z)\rVert_{\mathbb H}^2
:=
\lVert u\rVert_{L^2(\Omega_f)}^2
+
\lVert w\rVert_{H^1(\Omega_s)}^2
+
\lVert z\rVert_{L^2(\Omega_s)}^2.$$
In addition, we introduce
$$
\mathcal H
:=
L^2(\Omega_f)\times L^2(\Omega_s),
\qquad
\big((a,b),(c,d)\big)_{\mathcal H}
:=
(a,c)_{L^2(\Omega_f)}
+
(b,d)_{L^2(\Omega_s)}.
$$

\noindent For convenience, we write
$
X(t)
:=
\bigl(u(t),w(t),w_t(t)\bigr)
$
for the energy variable associated with the coupled system.
 Throughout the paper, we use
$
D^\alpha
:=
\partial_{x_1}^{\alpha},
\;
\alpha\in\mathbb N_0,
$
to denote tangential derivatives in the periodic direction $x_1$. In particular,
$D^\alpha u
=\partial_{x_1}^{\alpha}u$ and when mixed derivatives are required, we write
$$
\partial_{x_1}^{\alpha}\partial_{x_2}^{b}u,
\qquad
\alpha,b\in\mathbb N_0,
$$
We also recall that the  Sobolev norm involves derivatives in both
the tangential and normal directions:
$$
\lVert u\rVert_{H^s(\Omega)}^2
=
\sum_{\substack{\alpha,b\ge0\\\alpha+b \le s}}
\left\lVert
\partial_{x_1}^{\alpha}\partial_{x_2}^{b}u
\right\rVert_{L^2(\Omega)}^2.
$$
 We next introduce the abstract evolution formulation associated with the  heat-wave fluid-structure interaction system \eqref{1.0}. In particular, we define the corresponding generator and its domain.

\begin{definition}[Generator and its domain]
For the heat-wave fluid-structure interaction system \eqref{1.0}, set
$
v:=w_t, \textrm{and}\;
X:=(u,w,v).
$
We define the space
$
\mathbb H
:=
L^2(\Omega_f)
\times
V_s
\times
L^2(\Omega_s),
$
equipped with the norm
$
\lVert (u,w,v)\rVert_{\mathbb H}^2
:=
\lVert u\rVert_{L^2(\Omega_f)}^2
+
\lVert w\rVert_{H^1(\Omega_s)}^2
+
\lVert v\rVert_{L^2(\Omega_s)}^2$. The operator $A:D(A)\subset\mathbb H\to\mathbb H$ is defined by
$$
A
\begin{pmatrix}
u\\
w\\
v
\end{pmatrix}
=
\begin{pmatrix}
\Delta u\\
v\\
\Delta w-w
\end{pmatrix},
$$
or equivalently,
$$
A
=
\begin{pmatrix}
\Delta & 0 & 0\\
0 & 0 & I\\
0 & \Delta-I & 0
\end{pmatrix}.
$$
Thus the system \eqref{1.0} can be written formally as
$
X_t=AX.
$ Since
$$
AX=(\Delta u,\;v,\;\Delta w-w),
$$
we require
$$
\Delta u\in L^2(\Omega_f),
\quad
\Delta w-w\in L^2(\Omega_s),
\quad
v\in V_s,
$$
together with the boundary conditions \eqref{1.0c}-\eqref{1.0d} and coupling conditions \eqref{1.0e}-\eqref{1.0f} prescribed in \eqref{1.0}. Thus, the domain of $A$ is defined by 
\begin{align}
D(A)
=
\left\{
(u,w,v)\in \mathbb H,
\begin{array}{l}
u\in H^2(\Omega_f),\\
w\in H^2(\Omega_s),\\
v\in V_s,\\
u=0 \text{ on }\Gamma_f,\\
w=0 \text{ on }\Gamma_{s,0},\\
u=v \text{ on }\Gamma_s,\\
\partial_{\nu}u=\partial_{\nu}w \text{ on }\Gamma_s
\end{array}
\right\}.
\label{DA}
\end{align}
Here the condition $u=v$ on $\Gamma_s$ is the trace coupling condition,
since $v=w_t$, while
$
\partial_\nu u=\partial_\nu w
\;
\text{on }\Gamma_s
$
is the normal coupling condition.
\end{definition} 
\section{Limitations of higher-order Sobolev regularity methods}
\noindent We  investigate the possibility of establishing higher-order $H^s$-estimates for the heat-wave fluid-structure interaction system \eqref{1.0}. The following analysis identifies a structural bottleneck that prevents the extension of the regularity theory beyond the $H^3$-level. Rather than presenting heuristic difficulties, we examine two standard analytical frameworks for higher-order Sobolev regularity and identify the points at which the corresponding estimates cease to close. Specifically, we consider the recursive tangential-normal recovery procedure and the higher-order generator-domain approach.
\medskip

\noindent The difficulties in obtaining complete isotropic $H^s$-regularity for the coupled heat-wave system can be understood from the following two viewpoints:
\begin{itemize}
\item[(i)] the recursive tangential-normal recovery procedure,
\item[(ii)] the higher-order generator-domains $D(A^k)$.
\end{itemize}

\subsection{Limitation of the recursive tangential-normal recovery procedure}

\noindent We now explain precisely what is obtained from the available tangential energy estimates and why these estimates do not yield a full isotropic $H^s$-estimates for the fluid variable $u$.

\noindent Since
$
\Delta=\partial_{x_1}^2+\partial_{x_2}^2,
$
 \eqref{1.0a}
can be rewritten as
\begin{align}
\partial_{x_2}^2u
=
u_t-\partial_{x_1}^2u.
\label{nru}
\end{align}
From the tangential energy estimate \eqref{BHs}, we have
\begin{align}
\sup_{t\in[0,T]}
\sum_{\alpha=0}^{s}
\lVert \partial_{x_1}^{\alpha}u(t)\rVert_{L^2(\Omega_f)}^2
\le C,
\label{1.4R}
\end{align}
together with
\begin{align}
\int_0^T
\sum_{\alpha=0}^{s}
\lVert
\nabla\partial_{x_1}^{\alpha}u(t)
\rVert_{L^2(\Omega_f)}^2\,dt
\le C.
\label{1.5R}
\end{align}
We therefore deduce from \eqref{1.4R} and  \eqref{1.5R} that
\begin{align}
\label{Tan-u}
\partial_{x_1}^{\alpha}u
\in
L^\infty(0,T;L^2(\Omega_f)),
\;
0\le\alpha\le s,
\end{align}
and
$
\nabla\partial_{x_1}^{\alpha}u
\in
L^2(0,T;L^2(\Omega_f)),\; 
0\le\alpha\le s
$.
\noindent In particular, we obtain one normal derivative in the time-integrated sense,
$$
\partial_{x_1}^{\alpha}\partial_{x_2}u
\in
L^2(0,T;L^2(\Omega_f)),
\qquad
0\le\alpha\le s.
$$
Similarly, \eqref{LemmaA} yields
\begin{align}
\sup_{t\in[0,T]}
\sum_{\alpha=0}^{s-1}
\lVert \partial_{x_1}^{\alpha}u_t(t)\rVert_{L^2(\Omega_f)}^2
\le C,
\label{Tan}
\end{align}
and
\begin{align}
\int_0^T
\sum_{\alpha=0}^{s-1}
\lVert
\nabla\partial_{x_1}^{\alpha}u_t(t)
\rVert_{L^2(\Omega_f)}^2\,dt
\le C.
\label{aut-g}
\end{align}
Applying $\partial_{x_1}^k$ to \eqref{nru}, for $0\le k\le s-2$, gives
\begin{align}
\partial_{x_2}^2\partial_{x_1}^k u
=
\partial_{x_1}^k u_t
-
\partial_{x_1}^{k+2}u.
\label{nru-k}
\end{align}
Using \eqref{Tan} and
\eqref{Tan-u}, we deduce
\begin{align}
\sup_{t\in[0,T]}
\sum_{k=0}^{s-2}
\lVert
\partial_{x_2}^2\partial_{x_1}^k u(t)
\rVert_{L^2(\Omega_f)}^2
\le C.
\label{Secub}
\end{align}
However, this estimate alone does not imply the full isotropic bound
$
u\in L^\infty(0,T;H^s(\Omega_f))$.
Indeed, the full Sobolev norm requires control of
\begin{align}
\lVert u(t)\rVert_{H^s(\Omega_f)}^2
=
\sum_{\substack{\alpha,b\ge0\\\alpha+b \le s}}
\lVert
\partial_{x_1}^{\alpha}\partial_{x_2}^b u(t)
\rVert_{L^2(\Omega_f)}^2.
\label{FullHs}
\end{align}
The estimate \eqref{Secub} controls only the derivatives with exactly two normal derivatives,
$
\partial_{x_2}^2\partial_{x_1}^k u,
\;
0\le k\le s-2,
$
but it does not control higher normal derivatives such as
$
\partial_{x_1}\partial_{x_2}^3u,
\;
\partial_{x_2}^4u.$
To recover these missing terms, let $\alpha,b\ge0$, $b\ge2$, and $\alpha+b \le s$. Applying
$
\partial_{x_1}^{\alpha}\partial_{x_2}^{b-2}
$
to \eqref{nru} yields
\begin{align}
\partial_{x_1}^{\alpha}\partial_{x_2}^{b}u
=
\partial_{x_1}^{\alpha}\partial_{x_2}^{b-2}u_t
-
\partial_{x_1}^{\alpha+2}\partial_{x_2}^{b-2}u,
\label{nrg}
\end{align}
and hence implies
\begin{align}
\lVert
\partial_{x_1}^{\alpha}\partial_{x_2}^{b}u(t)
\rVert_{L^2(\Omega_f)}
&\le
\lVert
\partial_{x_1}^{\alpha}\partial_{x_2}^{b-2}u_t(t)
\rVert_{L^2(\Omega_f)}
+
\lVert
\partial_{x_1}^{\alpha+2}\partial_{x_2}^{b-2}u(t)
\rVert_{L^2(\Omega_f)},
\label{nrg-es}
\end{align}
where the second term contains two fewer normal derivatives of $u$, and therefore can be treated inductively once lower-order normal derivatives are known. The first term in \eqref{nrg-es}, however, requires the estimate
\begin{align}
u_t
\in
L^\infty(0,T;H^{s-2}(\Omega_f)).
\label{mH}
\end{align}
This estimate in \eqref{mH} is not supplied by the tangential energy method. The available bounds provide only tangential control of $u_t$:
$
\partial_{x_1}^{\alpha}u_t
\in
L^\infty(0,T;L^2(\Omega_f)),\;
0\le\alpha\le s-1,$ that is, $u_t
\in
L^\infty(0,T;H_{x_1}^{\alpha}(\Omega_f)) )$,
but do not provide all mixed derivatives
$
\partial_{x_1}^{\alpha}\partial_{x_2}^b u_t,\;
\alpha+b \le s-2
$. To recover the missing normal derivatives of $u_t$, differentiate \eqref{1.0a} with respect to time. Since
$
u_{tt}-\Delta u_t=0,
$
and
$
\Delta u_t
=
\partial_{x_1}^2u_t+\partial_{x_2}^2u_t,
$
we obtain
\begin{align}
\partial_{x_2}^2u_t
=
u_{tt}-\partial_{x_1}^2u_t.
\label{1.15F}
\end{align}
Applying the same recovery argument to $u_t$ in \eqref{1.15F} now requires
\begin{align}
u_{tt}
\in
L^\infty(0,T;H^{s-4}(\Omega_f)).
\label{mu_tt}
\end{align}
Repeating this procedure recursively yields
\begin{align}
u\in H^s(\Omega_f)
&\Longleftarrow
u_t\in H^{s-2}(\Omega_f),
\\
u_t\in H^{s-2}(\Omega_f)
&\Longleftarrow
u_{tt}\in H^{s-4}(\Omega_f),
\\
u_{tt}\in H^{s-4}(\Omega_f)
&\Longleftarrow
u_{ttt}\in H^{s-6}(\Omega_f),
\end{align}
and so forth.
Consequently, the recursive tangential-normal recovery procedure generates an entire sequence of required higher time-derivative estimates. In order to prove
$
u\in H^s(\Omega_f),
$
one must first establish
$
u_t\in H^{s-2}(\Omega_f).
$
However, proving
$
u_t\in H^{s-2}(\Omega_f)
$
requires
$
u_{tt}\in H^{s-4}(\Omega_f),
$
and proving this estimate further requires
$
u_{ttt}\in H^{s-6}(\Omega_f).
$
Thus each recovery step requires a new higher isotropic Sobolev estimate for a higher time derivative, and the proof becomes circular. Consequently, the tangential-normal recovery procedure fails to produce full $H^s$-estimates for the  heat-wave fluid-structure interaction system \eqref{1.0}.
\medskip

\noindent Thus the estimates rigorously obtained from the present argument remain anisotropic. In particular, we obtain
\begin{align}
\partial_{x_1}^{\alpha}u
&\in
L^\infty(0,T;L^2(\Omega_f)),
\qquad
0\le\alpha\le s,
\\
\nabla\partial_{x_1}^{\alpha}u
&\in
L^2(0,T;L^2(\Omega_f)),
\qquad
0\le\alpha\le s,
\\
\partial_{x_2}^2\partial_{x_1}^{k}u
&\in
L^\infty(0,T;L^2(\Omega_f)),
\qquad
0\le k\le s-2,
\\
\partial_{x_1}^{\alpha}u_t
&\in
L^\infty(0,T;L^2(\Omega_f)),
\qquad
0\le\alpha\le s-1,
\\
\nabla\partial_{x_1}^{\alpha}u_t
&\in
L^2(0,T;L^2(\Omega_f)),
\qquad
0\le\alpha\le s-1.
\end{align}
Equivalently, these estimates may be seen in the following
\begin{align}
u
&\in
L^\infty(0,T;H^s_{x_1}L^2_{x_2}(\Omega_f)),
\\
u
&\in
L^2(0,T;H^{s+1}_{x_1}L^2_{x_2}(\Omega_f))
\cap
L^2(0,T;H^s_{x_1}H^1_{x_2}(\Omega_f)),
\\
u
&\in
L^\infty(0,T;H^{s-2}_{x_1}H^2_{x_2}(\Omega_f)),
\\
u_t
&\in
L^\infty(0,T;H^{s-1}_{x_1}L^2_{x_2}(\Omega_f)),
\\
u_t
&\in
L^2(0,T;H^s_{x_1}L^2_{x_2}(\Omega_f))
\cap
L^2(0,T;H^{s-1}_{x_1}H^1_{x_2}(\Omega_f)).
\end{align}
\medskip

\noindent
An analogous obstruction occurs for the structure variable $w$.  Therefore, without additional regularity assumptions, these estimates do not imply
$$
u\in L^\infty(0,T;H^s(\Omega_f)),
\qquad
w\in L^\infty(0,T;H^s(\Omega_s)),
\qquad
w_t\in L^\infty(0,T;H^{s-1}(\Omega_s)).
$$
The discussion above highlights the difficulties in extending the estimates to $H^s$-regularity. In particular, the following note shows how these difficulties manifest themselves at the $H^4$-level, which is the first level beyond the established $H^3$-estimates.
\begin{note}
In particular, the obstruction appears when one attempts to continue the
normal recovery procedure beyond the $H^3$-level. At the $H^3$-level,
\eqref{thnle} shows that $\partial_{x_2}^3 w$ can be controlled by quantities
already bounded by the preceding estimates. Consequently, the normal recovery
argument closes at the $H^3$-level. However, one further differentiation in \eqref{thnl} yields 
\begin{align*}
\partial_{x_2}^4 w
=
\partial_{x_2}^2 w_{tt}
-
\partial_{x_1}^2\partial_{x_2}^2 w
+
\partial_{x_2}^2 w,
\end{align*}
 where the term
$$
\partial_{x_1}^2\partial_{x_2}^2 w
$$
require additional derivatives that are not supplied by the classical
second-order diffusion and \eqref{BHs}. Consequently, the derivative budget provided by the
Laplacian is exhausted at the $H^3$-level, and the normal recovery procedure
cannot be continued to obtain $H^4$-estimates. This identifies the
fundamental bottleneck in the regularity  of the coupled system and
explains why the $H^3$-level cannot be iterated to obtain
$H^4$-estimates.
\end{note}

\subsection{Limitations of the higher-order generator-domains  $D(A^k)$}

\noindent
We now explain why the semigroup framework based on the higher generator domains $D(A^k)$ does not by itself yield  $H^s$-estimates for the heat-wave fluid-structure interaction system \eqref{1.0}.

\noindent Let
$
Y=(u,w,v),
\;
v=w_t,
$
and recall from Definition \ref{DA} that
$
A(u,w,v)
=
(\Delta u,v,\Delta w-w).
$
Then \eqref{1.0} may be written abstractly as
$
Y_t=AY$. We recall the recursive definition of the iterated domains (see,
\cite[Definition 2.3.1]{zheng2004nonlinear}):
$$
D(A^k)
=
\{u\mid u\in D(A^{k-1}),\ Au\in D(A^{k-1})\},
\qquad k\in\mathbb N.
$$
For example,
$$
Y\in D(A^2)
\quad\Longleftrightarrow\quad
Y\in D(A)
\quad\text{and}\quad
AY\in D(A).
$$
Since
$
AY
=
(\Delta u,v,\Delta w-w),
$
the condition
$
AY\in D(A)
$
means that the triple
$
(\Delta u,v,\Delta w-w)
$
must satisfy the defining conditions in \eqref{DA}. Since $u=v$ on $\Gamma_s$ is part of the definition \ref{DA}, applying \eqref{DA} to the triple
$
(\Delta u,v,\Delta w-w)
$
gives
\begin{align}
\Delta u
=
\Delta w-w
\qquad
\text{on }\Gamma_s.
\label{DA2W}
\end{align}
Similarly, since
$
\partial_\nu u
=
\partial_\nu w
\;
\text{on }\Gamma_s
$
is also part of \eqref{DA}, applying \eqref{DA} to
$
(\Delta u,v,\Delta w-w)
$
gives
\begin{align}
\partial_\nu\Delta u
=
\partial_\nu(\Delta w-w)
\qquad
\text{on }\Gamma_s.
\label{DA2PO}
\end{align}
At first sight, one might try to recover
$
u\in H^4(\Omega_f)
$
from the regularity of $\Delta u$. However, elliptic recovery at the $H^4$-level requires both sufficiently regular interior forcing and sufficiently regular boundary traces. More precisely, by the classical elliptic regularity theory for the Dirichlet Laplacian (see Lions-Magenes \cite[p.~VI, Preface; Chapter~2]{LionsMagenes1972}), one has
\begin{align}
\lVert u\rVert_{H^4(\Omega_f)}
\le
C\Big(
\lVert \Delta u\rVert_{H^2(\Omega_f)}
+
\lVert u\rvert_{\partial\Omega_f}\rVert_{H^{7/2}(\partial\Omega_f)}
\Big).
\label{u-H4}
\end{align}
From \eqref{1.0a},
we obtain $
\Delta u=u_t.
$
Hence recovering
$
u\in H^4(\Omega_f)
$
through \eqref{u-H4} requires both
$
\Delta u\in H^2(\Omega_f)
$
and
$
u\rvert_{\partial\Omega_f}
\in
H^{7/2}(\partial\Omega_f)$. From
$
Y\in D(A^2),
$
we have
$
AY=(\Delta u,v,\Delta w-w)\in D(A).
$
Therefore, by \eqref{DA},
$$
\Delta u\in H^2(\Omega_f),
\qquad
v\in H^2(\Omega_s),
\qquad
\Delta w-w\in V_s,
$$
together with the relations
\eqref{DA2W}-\eqref{DA2PO} on $\Gamma_s$. Moreover, since
$
u=v
\;
\text{on }\Gamma_s,
$
and
$
v\in H^2(\Omega_s),
$
the trace theorem yields
$$
u\rvert_{\Gamma_s}
\in
H^{3/2}(\Gamma_s).
$$
On the other hand, the elliptic estimate \eqref{u-H4} requires the stronger boundary regularity
$$
u\rvert_{\partial\Omega_f}
\in
H^{7/2}(\partial\Omega_f).
$$
In particular, this requires
$$
u\rvert_{\Gamma_s}
\in
H^{7/2}(\Gamma_s).
$$
However, the information provided by
$
Y\in D(A^2)
$
yields only
$$
u\rvert_{\Gamma_s}
\in
H^{3/2}(\Gamma_s),
$$
and
$$
H^{3/2}(\Gamma_s)
\not\subset
H^{7/2}(\Gamma_s),
$$
hence, the regularity supplied by $D(A^2)$ is insufficient to verify the boundary regularity required by \eqref{u-H4}, and thus does not yield
$$
u\in H^4(\Omega_f).
$$
A similar obstruction appears for the structure component. Recovering
$
w\in H^4(\Omega_s)
$
would require an estimate of the form
\begin{align}
\lVert w\rVert_{H^4(\Omega_s)}
\le
C\Big(
\lVert \Delta w\rVert_{H^2(\Omega_s)}
+
\lVert w\rVert_{\Gamma_{s,0}}\rVert_{H^{7/2}(\Gamma_{s,0})}
+
\lVert \partial_\nu w\rvert_{\Gamma_s}\rVert_{H^{5/2}(\Gamma_s)}
\Big).
\label{w-H4}
\end{align}
From \eqref{1.0b},
we obtain
\begin{align}
\Delta w
=
w_{tt}+w,
\label{w-lap}
\end{align}
and recovering
$
w\in H^4(\Omega_s)
$
through \eqref{w-H4} requires
\begin{align}
w_{tt}+w
\in
H^2(\Omega_s).
\label{wtt-H2}
\end{align}
From \eqref{1.0d}, it follows that
$
w=0
\;
\text{on }\Gamma_{s,0},
$
so $\lVert w\rvert_{\Gamma_{s,0}}\rVert_{H^{7/2}(\Gamma_{s,0})}=0$, as seen in \eqref{w-H4}. 
Hence, if
$
u\in H^2(\Omega_f),
$
then the trace theorem gives
\begin{align}
\partial_\nu u\rvert_{\Gamma_s}
\in
H^{1/2}(\Gamma_s).
\label{HLTL}
\end{align}
Since
$
\partial_\nu w
=
\partial_\nu u
\;
\text{on }\Gamma_s,$ and
\eqref{HLTL} yields only
\begin{align}
\partial_\nu w\rvert_{\Gamma_s}
\in
H^{1/2}(\Gamma_s),
\label{67TA1}
\end{align}
however, \eqref{w-H4} requires,
\begin{align}
\partial_\nu w\rvert_{\Gamma_s}
\in
H^{5/2}(\Gamma_s),
\end{align} therefore \eqref{67TA1} is insufficient to close \eqref{w-H4}.
Furthermore,
$
Y\in D(A^2)
$
implies only that
$$
\Delta w-w\in V_s,
$$ and the elliptic estimate \eqref{w-H4} requires the stronger interior regularity requirement
\eqref{wtt-H2}. Therefore, the information obtained from
$
Y\in D(A^2)
$
is insufficient to close the elliptic estimate \eqref{w-H4}, and consequently does not by itself imply
$
w\in H^4(\Omega_s).
$

\noindent In contrast, semigroup theory implies that if
$
Y_0\in D(A^k),
$
then the corresponding solution satisfies
$$
Y\in \bigcap_{j=0}^k C^{k-j}([0,+\infty);D(A^j)),
$$
see, \cite[Theorem~2.4]{zheng2004nonlinear}. However, this result yields
regularity only in the iterated generator domains $D(A^j)$. To obtain
isotropic Sobolev regularity, one must identify these domains with the
corresponding Sobolev spaces. The preceding analysis shows that, for the
 heat-wave fluid-structure interaction system\eqref{1.0}, the information contained in $D(A^2)$ is
insufficient to recover
$$
u\in H^4(\Omega_f),
\qquad
w\in H^4(\Omega_s).
$$
Hence, higher-order generator-domain regularity does not by itself
yield higher-order Sobolev regularity for the coupled system.

\medskip

\noindent
Therefore, the semigroup-generator iteration based on the higher domains $D(A^k)$ does not by itself close the elliptic estimates needed to recover higher-order Sobolev regularity for the  heat-wave fluid-structure interaction system \eqref{1.0}.

\medskip

\noindent We now introduce the corresponding notion of weak solution for the heat-wave fluid-structure interaction system \eqref{1.0}.
\begin{definition}[Weak solution]
\label{esw}
A triple
$
X=(u,w,w_t)
$
is called a weak solution of \eqref{1.0} on $[0,T]$ if the following conditions hold:
\begin{enumerate}

\item \label{Item 1}
$
X\in L^\infty(0,T;\mathbb H),
\;
u\in L^2(0,T;H^1(\Omega_f)).
$

\item
$
\partial_t(u,w_t)
\in
L^2(0,T;\mathcal V').
$
\item
For every $(\varphi_f, \varphi_s)\in\mathcal V$ and for a.e. $t\in(0,T)$,
\begin{align}
\langle
\partial_t(u,w_t)(t),
(\varphi_f,\varphi_s)
\rangle_{\mathcal V',\mathcal V}
+
(\nabla u(t),\nabla\varphi_f)_{\Omega_f}
+
(\nabla w(t),\nabla\varphi_s)_{\Omega_s}
+
(w(t),\varphi_s)_{\Omega_s}
=
0.
\label{1.33H}
\end{align}
\end{enumerate}
The trace and normal coupling conditions are understood through the coupled
variational formulation \eqref{1.33H} and the choice of the test space
$\mathcal V$.
\end{definition}
\begin{remark}
The formulation of weak solutions in Definition~\ref{esw} differs from
that employed by
Barbu, Gruji\'c, Lasiecka, and Tuffaha \cite{barbu2007existence}. Their analysis
first considers semigroup solutions corresponding to initial data in
the generator domain $D(A_L)$. At this level of regularity, the
boundary traces satisfy
$$
w_t\rvert_{\Gamma_s}=u\rvert_{\Gamma_s}
\in H^{1/2}(\Gamma_s),
\qquad
\sigma(w)\cdot \nu\in H^{-1/2}(\Gamma_s).
$$
The extension to energy-level solutions is then obtained by density
arguments together with a hidden regularity result for the hyperbolic
component, yielding
$
\sigma(w) \cdot \nu
\in
L^2(0,T;H^{-1/2}(\Gamma_s)).
$
\medskip

\noindent
In contrast, the present analysis is carried out entirely in the
natural energy space
$$
\mathbb H
=
L^2(\Omega_f)\times V_s\times L^2(\Omega_s),
$$
without imposing additional regularity assumptions on the initial
data. Although
$
u\in L^2(0,T;H^1(\Omega_f))
$
implies
$$
u\rvert_{\Gamma_s}
\in
L^2(0,T;H^{1/2}(\Gamma_s)),
$$
the energy regularity does not provide the traces
$$
w_t\rvert_{\Gamma_s},
\qquad
\partial_\nu u\rvert_{\Gamma_s},
\qquad
\partial_\nu w\rvert_{\Gamma_s}.
$$
Consequently, the trace and normal coupling conditions
$$
u=w_t,
\qquad
\partial_\nu u=\partial_\nu w,
$$
are incorporated through the coupled variational formulation
\eqref{1.33H} and the choice of the space $\mathcal V$.
\end{remark}
\section{Main Results}
\noindent 
In this section, we present the main results of the paper. We first establish the global existence and uniqueness of weak solutions together with the corresponding trace  coupling condition in Theorem \ref{Th 2.1}. We then derive the higher-order tangential and time-differentiated a priori estimates in Proposition \ref{Prop 2.2}. Finally, using these estimates together with the coupled equations, we obtain the $H^3$-estimates for the heat-wave fluid-structure interaction system\eqref{1.0} in Theorem \ref{Th 2.3}.

\begin{theorem}
\label{Th 2.1} Let $\mathbb H= L^2(\Omega_f)\times V_s\times L^2(\Omega_s)
)$.
Assume that
$X(0)=
(u_0,w_0,w_1)
\in \mathbb{H}$.  
Then for every $T>0$, there exists a weak solution
$
X(t):=(u(t),w(t),w_t(t)) \in L^{\infty}(0,T;\mathbb{H})
$
to \eqref{1.0} in the sense of Definition \ref{esw}.Moreover,
$$
X\in C_w([0,T];\mathbb H),
$$
and the solution fulfills the initial condition in the sense
$
\lim_{t\to0^+}\lVert X(t)-X_0\rVert_{\mathbb H}=0.
$
In addition, the trace coupling relation $u\rvert_{\Gamma_s}=w_t\rvert_{\Gamma_s}$ is recovered in the Galerkin trace sense and  uniqueness holds within the class of weak solutions obtained through the Galerkin approximation.
\end{theorem}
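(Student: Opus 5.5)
We argue by a Galerkin scheme built on the coupled test space $\mathcal V$, since the trace coupling condition is part of the space and the normal coupling condition then disappears into the weak form.

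\textbf{Galerkin basis and approximate system.} Choose a basis $\{(\varphi_f^j,\varphi_s^j)\}_{j\ge1}\subset\mathcal V$, for instance the eigenfunctions of the coupled Laplacian: $-\Delta\varphi_f=\lambda\varphi_f$ in $\Omega_f$ and $-\Delta\varphi_s+\varphi_s=\lambda\varphi_s$ in $\Omega_s$, with the transmission conditions $\varphi_f=\varphi_s$ and $\partial_\nu\varphi_f=\partial_\nu\varphi_s$ on $\Gamma_s$. This basis is orthonormal in $\mathcal H$ and dense in $\mathcal V$. Because of the mixed parabolic/hyperbolic structure, the unknown pair $(u,w_t)$ should be expanded in this basis, $(u^m,w_t^m)=\sum_j g_j(t)(\varphi_f^j,\varphi_s^j)$, and $w^m$ defined by $w^m(t)=w_0^m+\int_0^t w_t^m$. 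Imposing \eqref{1.33H} against each basis element gives a linear ODE system for $g$, which has a global solution on $[0,T]$.

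\textbf{Energy estimate.} Test the approximate system with $(u^m,w_t^m)$ itself, which lies in $\mathcal V_m\subset\mathcal V$, and use $(\nabla w^m,\nabla w_t^m)+(w^m,w_t^m)=\tfrac12\tfrac{d}{dt}(\lVert\nabla w^m\rVert^2+\lVert w^m\rVert^2)$. This yields the energy identity
$$\tfrac12\tfrac{d}{dt}\bigl(\lVert u^m\rVert^2+\lVert w_t^m\rVert^2+\lVert \nabla w^m\rVert^2+\lVert w^m\rVert^2\bigr)+\lVert\nabla u^m\rVert^2=0.$$
It gives uniform bounds for $X^m$ in $L^\infty(0,T;\mathbb H)$ and for $u^m$ in $L^2(0,T;H^1)$. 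The interface terms cancel precisely because the trace coupling is built into $\mathcal V$.

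\textbf{Dual bound for the time derivative.} Using the $\mathcal H$-orthogonal projection $P_m$, one gets $|\langle\partial_t(u^m,w_t^m),\varphi\rangle|\le C(\lVert\nabla u^m\rVert+\lVert w^m\rVert_{H^1})\lVert\varphi\rVert_{\mathcal V}$. The key technical point, and the step I expect to be the main obstacle, is that $P_m$ must be uniformly bounded on $\mathcal V$. The spectral choice of basis delivers this, since $P_m$ is then also the $\mathcal V$-orthogonal projection. With it, $\partial_t(u^m,w_t^m)$ is bounded in $L^2(0,T;\mathcal V')$.

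\textbf{Passage to the limit.} Extract weak-$*$ and weak limits. Since the system is linear, passing to the limit in the time-integrated form against $\psi(t)\varphi^j$ is immediate, and density of the basis in $\mathcal V$ gives \eqref{1.33H} for a.e. $t$. Membership of the Galerkin pair $(u^m,w_t^m)$ in $\mathcal V$ gives $u^m=w_t^m$ on $\Gamma_s$. Passing to the limit then gives the trace coupling in the sense of limits of Galerkin traces (the ``Galerkin trace sense'').

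\textbf{Weak continuity and initial data.} From $(u,w_t)\in L^2(\mathcal V')$ and $w_t\in L^\infty(L^2)$ we get $X\in C([0,T];\mathcal V'\times L^2)$. Combined with $L^\infty(\mathbb H)$ boundedness, this gives $X\in C_w([0,T];\mathbb H)$, and identifying $X(0)=X_0$ uses the integrated weak form with $\psi(0)\ne0$. For strong attainment, pass to the limit in the energy identity and use lower semicontinuity of norms to get $\limsup_{t\to0}\lVert X(t)\rVert_{\mathbb H}\le\lVert X_0\rVert_{\mathbb H}$, taking the energy norm with $\lVert w\rVert^2+\lVert\nabla w\rVert^2$, which is equivalent to $\mathbb H$. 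Weak convergence together with convergence of norms then gives strong convergence.

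\textbf{Uniqueness.} Since the system is linear, the difference of two Galerkin-limit solutions satisfies the energy inequality with zero data, so it vanishes.
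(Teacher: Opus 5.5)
Your existence argument is sound, but it handles the step you single out as the main obstacle differently from the paper. You take the spectral basis of the transmission operator associated with the form $a(\varphi,\psi)=(\nabla\varphi_f,\nabla\psi_f)_{\Omega_f}+(\nabla\varphi_s,\nabla\psi_s)_{\Omega_s}+(\varphi_s,\psi_s)_{\Omega_s}$ on $\mathcal V$. Then the $\mathcal H$-projection $P_m$ is $a$-orthogonal, hence uniformly bounded on $\mathcal V$, and the Galerkin time derivatives are bounded in $L^2(0,T;\mathcal V')$.

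The paper uses an arbitrary $\mathcal V$-orthonormal basis, with a nontrivial mass matrix, and never bounds $\partial_t(u^N,w_t^N)$ in a fixed dual space. It bounds only the residual functional $F_N(t)$, i.e. the spatial part of the equation. This functional is defined on all of $\mathcal V$ and is bounded in $L^2(0,T;\mathcal V')$ by the energy bounds alone. The paper then extracts $F_N\rightharpoonup G$ and identifies $G=\partial_t(u,w_t)$ by testing against fixed $\varphi^M\in\mathcal V_M$ with $N\ge M$, and then letting $M\to\infty$.

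For this linear problem, that device removes your obstacle and does not depend on the basis. Your route costs the spectral problem but buys something in return. It gives equicontinuity of $(u^m,w_t^m)$ in $\mathcal V'$, hence, by Arzel\`a--Ascoli along a subsequence, $X^m(t)\rightharpoonup X(t)$ in $\mathbb H$ for every $t$. You can then pass to the limit in the energy identity pointwise, instead of using the paper's $\psi$-weighted inequality followed by an extension via weak continuity. In your continuity step, the hypothesis should read $\partial_t(u,w_t)\in L^2(0,T;\mathcal V')$, not $(u,w_t)\in L^2(0,T;\mathcal V')$.

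The step that does not hold as written is uniqueness. A Galerkin limit inherits its energy inequality by lower semicontinuity from its own approximating sequence, not from the limit equation. So the inequality does not pass to the difference $X^{(1)}-X^{(2)}$ of two Galerkin limits unless that difference is itself a limit of Galerkin solutions. That holds only when both solutions are produced in the same spaces $\mathcal V_m$ and along a common subsequence. In that case, by linearity, $X^{(1)}_m-X^{(2)}_m$ solves the Galerkin system with data tending to $0$, and the Galerkin energy identity gives $X^{(1)}=X^{(2)}$.

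If the limits come from different subsequences or different bases, the difference only satisfies the weak formulation. An energy estimate for it would require testing with $(\widehat u,\widehat z)$, which is not admissible because $\widehat z\notin L^2(0,T;V_s)$. This is exactly the obstruction discussed in the remark following the theorem.

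The paper's proof claims less than you do. It shows only that two Galerkin approximations in the same $\mathcal V_N$ with the same data coincide, via the energy identity for their difference with zero data. You should either restrict your claim to that level-$N$ (or common-subsequence) statement, or give a real uniqueness argument for the class of Galerkin limits. Such an argument would have to use properties these limits have beyond Definition~\ref{esw}, for instance $u\in L^2(0,T;V_f)$ and $\bigl(\int_0^t u\,d\tau,\;w(t)-w_0\bigr)\in\mathcal V$ for every $t$.
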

\noindent The uniqueness statement in Theorem~\ref{Th 2.1}
is restricted to weak solutions obtained through the Galerkin
approximation. The following remark explains why uniqueness for
arbitrary weak solutions in the energy space remains open.
\begin{remark}
Let
$
\widehat X
=
(\widehat u,\widehat w,\widehat z)
$
be the difference of two weak solutions satisfying Definition~\ref{esw}. From
the regularity obtained in the existence theorem \ref{Th 2.1}, we have
$$
\widehat u\in L^2(0,T;V_f),
\qquad
\widehat w\in L^\infty(0,T;V_s),
\qquad
\widehat z=\widehat w_t
\in L^\infty(0,T;L^2(\Omega_s)).
$$
Subtracting \eqref{1.33H} for the two weak solutions yields the corresponding
difference equation. To derive the associated energy identity, one would
naturally test this difference equation with
$$
(\varphi_f,\varphi_s)
=
(\widehat u,\widehat z),
$$
which requires
$$
\widehat z\in L^2(0,T;V_s),
$$
so that
$$
(\widehat u,\widehat z)\in L^2(0,T;\mathcal V).
$$
However, the regularity obtained in the existence theorem does not provide
$$
\widehat z\in L^2(0,T;V_s).
$$
This loss of regularity originates from the hyperbolic component of the
coupled system \eqref{1.0}, for which the existence of weak solution result yields only
$$
w_t\in L^\infty(0,T;L^2(\Omega_s)).
$$
However, if the initial data are assumed to belong to $D(A)$, then the
semigroup regularity theorem \cite[Chapter~1, Theorem~2.4]{pazy2012semigroups} yields
$
X(t)\in D(A)
\;
\text{for all }t\in[0,T]$. Consequently,
$
w_t(t)\in V_s$,
and therefore, for the difference of two such regular solutions, we have
$$
\widehat z=\widehat w_t\in L^2(0,T;V_s),$$ and 
hence
$$
(\widehat u,\widehat z)\in L^2(0,T;\mathcal V).$$
Therefore, one can establish uniqueness by testing the difference equation
with
$
(\widehat u,\widehat z)$; see, for example, \cite[Corollary~3.1, Theorem~4.1]{barbu2007existence}. Such a result, however, requires
 stronger regularity assumptions than the regularity available in
Definition~\ref{esw}. Therefore, the analysis establishes uniqueness only within the class of
weak solutions obtained through the Galerkin approximation. This argument
does not yield uniqueness for arbitrary weak solutions in the energy space,
because the regularity available in Definition~\ref{esw} is insufficient to
justify testing the difference equation with $(\widehat u,\widehat z)$. The question of uniqueness for arbitrary weak solutions in the
energy space
$
\mathbb H
=
L^2(\Omega_f)\times V_s\times L^2(\Omega_s)
$
remains open.
\end{remark}
\noindent
\noindent
With this clarification concerning the scope of the uniqueness statement, we now proceed to the proof of Theorem~\ref{Th 2.1}. 
\begin{proof}[Proof of Theorem \ref{Th 2.1}]
We proceed as follows (i) Galerkin approximation-formulation, (ii) solvability of Galerkin approximation and energy estimates, (iii) passage to the limit, (iv)  energy inequality for the limit, (v) time derivative estimate and recovery of the weak formulation, (vi) weak continuity of the energy space, 
(vii) initial condition, 
(viii) Strong continuity at the initial time, (ix) uniqueness
\medskip

\noindent\underline{\textbf{Galerkin approximation-formulation:}}
We introduce
$$
V_f
=
\{v\in H^1(\Omega_f):v=0\text{ on }\Gamma_f\},
\qquad
V_s
=
\{v\in H^1(\Omega_s):v=0\text{ on }\Gamma_{s,0}\},
$$
and define the coupled space
$$
\mathcal V
=
\{
(\varphi_f,\varphi_s)\in V_f\times V_s:
\varphi_f\rvert_{\Gamma_s}
=
\varphi_s\rvert_{\Gamma_s}
\}.
$$
For the coupled pair $(u,w_t)$, we introduce
\begin{align}
\mathcal H
:=
L^2(\Omega_f)\times L^2(\Omega_s),
\qquad
\big((a,b),(c,d)\big)_{\mathcal H}
:=
(a,c)_{L^2(\Omega_f)}
+
(b,d)_{L^2(\Omega_s)}.
\label{Hspace}
\end{align}
The space $\mathcal V$ is a closed subspace of $V_f\times V_s$. Since
$V_f$ and $V_s$ are separable Hilbert spaces, $\mathcal V$ is also a
separable Hilbert space. Hence, there exists a countable orthonormal basis of $\mathcal V$,
$$
\{(\Phi_k,\Psi_k)\}_{k\ge1}\subset\mathcal V,
$$
with respect to the $\mathcal V$-inner product. In particular, this basis
is complete in $\mathcal V$, and hence
\begin{align}
\overline{
\operatorname{span}
\{(\Phi_k,\Psi_k):k\ge1\}
}^{\,\mathcal V}
=
\mathcal V.
\label{V-dense}
\end{align}
For each $N\in\mathbb N$, define
\begin{align}
\mathcal V_N
:=
\operatorname{span}
\{(\Phi_1,\Psi_1),\dots,(\Phi_N,\Psi_N)\}
\subset\mathcal V,
\label{VN}
\end{align}
and the completeness of the basis yields
\begin{align}
\overline{
\bigcup_{N=1}^{\infty}\mathcal V_N
}^{\,\mathcal V}
=
\mathcal V.
\label{VN-dense}
\end{align}
Since $(\Phi_k,\Psi_k)\in\mathcal V$, each basis pair satisfies
\begin{align}
\Phi_k\rvert_{\Gamma_f}
&=0,
&
\Psi_k\rvert_{\Gamma_{s,0}}
&=0,
&
\Phi_k\rvert_{\Gamma_s}
&=
\Psi_k\rvert_{\Gamma_s}.
\label{bby}
\end{align}
We choose approximate initial data
\begin{align}
(u_0^N,w_1^N)
&\in
\mathcal V_N,
&
w_0^N
&\in
\operatorname{span}\{\Psi_1,\dots,\Psi_N\},
\end{align}
such that
\begin{align}
u_0^N
\to u_0
\quad\text{in }L^2(\Omega_f),
\qquad
w_0^N
\to w_0
\quad\text{in }H^1(\Omega_s),
\qquad
w_1^N
\to w_1
\quad\text{in }L^2(\Omega_s).
\label{OPE56}
\end{align}
Since $(u_0^N,w_1^N)\in\mathcal V_N$, there exist coefficients
$g_1(0),\dots,g_N(0)$ such that
\begin{align}
u_0^N
=
\sum_{k=1}^{N} g_k(0)\Phi_k,
\qquad
w_1^N
=
\sum_{k=1}^{N} g_k(0)\Psi_k.
\label{AQmZ6}
\end{align}
In addition, there exist coefficients $h_1(0),\dots,h_N(0)$ such that
\begin{align}
w_0^N
=
\sum_{k=1}^{N} h_k(0)\Psi_k.
\label{w0N-exp}
\end{align}
We seek approximate solutions of the form
\begin{align}
u^N(t)
&=
\sum_{k=1}^{N} g_k(t)\Phi_k,
\label{unaz}
\\
w^N(t)
&=
\sum_{k=1}^{N} h_k(t)\Psi_k,
\label{EROL}
\end{align}
and impose
\begin{align}
\dot h_k(t)=g_k(t),
\qquad
k=1,\dots,N.
\label{ghrelation}
\end{align}
From 
\eqref{AQmZ6}, \eqref{w0N-exp} and \eqref{unaz}, \eqref{EROL}, we obtain
\begin{align}
u^N(0)=u_0^N,
\qquad
w^N(0)=w_0^N.
\label{1L}
\end{align}
Moreover, since $\dot h_k(t)=g_k(t)$, from \eqref{EROL}, we have
\begin{align}
w_t^N(0)
=
\sum_{k=1}^{N}\dot h_k(0)\Psi_k
=
\sum_{k=1}^{N}g_k(0)\Psi_k
=
w_1^N.
\label{1KO}
\end{align}
Combine \eqref{1L} and \eqref{1KO}  implies
\begin{align}
u^N(0)=u_0^N,
\qquad
w^N(0)=w_0^N,
\qquad
w_t^N(0)=w_1^N.
\label{2.10PG}
\end{align}
We then proceed with \eqref{EROL} to obtain
\begin{align}
w_t^N(t)
=
\sum_{k=1}^{N} \dot h_k(t)\Psi_k
=
\sum_{k=1}^{N} g_k(t)\Psi_k.
\label{wtanz}
\end{align}
Using \eqref{bby}, \eqref{unaz}, and \eqref{wtanz}, we obtain
\begin{align}
u^N\rvert_{\Gamma_s}
=
w_t^N\rvert_{\Gamma_s},
\label{WGRq}
\end{align}
thus the trace coupling condition is satisfied at the Galerkin level. Moreover, since
$
\Psi_k\rvert_{\Gamma_{s,0}}=0,
\;
k=1,\dots,N,
$
it follows from \eqref{EROL} that
$
w^N\rvert_{\Gamma_{s,0}}
=
0.$
The normal coupling condition
$$
\partial_\nu u^N
=
\partial_\nu w^N
\qquad
\text{on }\Gamma_s,
$$
is incorporated through the coupled variational formulation. Indeed, for every
$
(\varphi_f,\varphi_s)\in\mathcal V_N,
$
testing \eqref{1.0a} by $\varphi_f$ and \eqref{1.0b} by
$\varphi_s$, integrating by parts in space, using the boundary conditions
on $\Gamma_f$ and $\Gamma_{s,0}$, the trace identity
$
\varphi_f\rvert_{\Gamma_s}
=
\varphi_s\rvert_{\Gamma_s},
$
and the normal coupling on $\Gamma_s$, yields
\begin{align}
&(u_t^N,\varphi_f)_{\Omega_f}
+
(\nabla u^N,\nabla\varphi_f)_{\Omega_f}
+
(w_{tt}^N,\varphi_s)_{\Omega_s}
+
(\nabla w^N,\nabla\varphi_s)_{\Omega_s}
+
(w^N,\varphi_s)_{\Omega_s}
=
0,
\label{Gw}
\end{align}
for every
$
(\varphi_f,\varphi_s)\in\mathcal V_N.
$
\medskip

\noindent\underline{\textbf{Solvability of Galerkin approximation and energy estimates:}}
Using \eqref{unaz}, \eqref{EROL}, \eqref{wtanz} and taking $(\varphi_f,\varphi_s)=(\Phi_i,\Psi_i)$, $i=1,\dots,N$, in
\eqref{Gw}  gives
\begin{align}
\sum_{j=1}^{N}M_{ij}\dot g_j(t)
+
\sum_{j=1}^{N}A_{ij}g_j(t)
+
\sum_{j=1}^{N}B_{ij}h_j(t)
&=0,
\qquad i=1,\dots,N,
\label{Galerkin-ODE1}
\\
\dot h_i(t)&=g_i(t),
\qquad i=1,\dots,N,
\label{Galerkin-ODE2}
\end{align}
where
\begin{align}
M_{ij}
&=
(\Phi_j,\Phi_i)_{L^2(\Omega_f)}
+
(\Psi_j,\Psi_i)_{L^2(\Omega_s)},
\\
A_{ij}
&=
(\nabla\Phi_j,\nabla\Phi_i)_{\Omega_f},
\\
B_{ij}
&=
(\nabla\Psi_j,\nabla\Psi_i)_{\Omega_s}
+
(\Psi_j,\Psi_i)_{\Omega_s}.
\end{align}
The mass matrix $M=(M_{ij})_{i,j=1}^{N}$ is symmetric and positive
definite. Indeed, for any $\alpha=(\alpha_1,\dots,\alpha_N)\in\mathbb R^N$,
\begin{align}
\sum_{i,j=1}^{N}M_{ij}\alpha_i\alpha_j
=
\left\lVert 
\sum_{j=1}^{N}\alpha_j\Phi_j
\right\rVert_{L^2(\Omega_f)}^2
+
\left\lVert 
\sum_{j=1}^{N}\alpha_j\Psi_j
\right\rVert_{L^2(\Omega_s)}^2.
\end{align}

\noindent Choosing $(\varphi_f,\varphi_s)=(\Phi_m,\Psi_m)$ in \eqref{Gw}, multiplying
by $g_m(t)$, summing over $m=1,\dots,N$, and using
\eqref{unaz} and \eqref{wtanz}, we obtain
\begin{align}
&(u_t^N,u^N)_{\Omega_f}
+
\lVert \nabla u^N\rVert_{L^2(\Omega_f)}^2
+
(w_{tt}^N,w_t^N)_{\Omega_s}
+
(\nabla w^N,\nabla w_t^N)_{\Omega_s}
+
(w^N,w_t^N)_{\Omega_s}
=
0,
\label{ensp}
\end{align}
 where \eqref{ensp} implies 
\begin{align}
\frac{d}{dt}\mathcal{E}_N(t)
+
\lVert \nabla u^N(t)\rVert_{L^2(\Omega_f)}^2
=
0,
\label{EnID}
\end{align}
where
\begin{align}
\mathcal{E}_N(t)
:=
\frac12\lVert u^N(t)\rVert_{L^2(\Omega_f)}^2
+
\frac12\lVert w_t^N(t)\rVert_{L^2(\Omega_s)}^2
+
\frac12\lVert \nabla w^N(t)\rVert_{L^2(\Omega_s)}^2
+
\frac12\lVert w^N(t)\rVert_{L^2(\Omega_s)}^2.
\label{enfl}
\end{align}
Integrating \eqref{EnID} over $(0,t)$, we get
\begin{align}
\mathcal{E}_N(t)
+
\int_0^t
\lVert \nabla u^N(\tau)\rVert_{L^2(\Omega_f)}^2\,d\tau
=
\mathcal{E}_N(0).
\label{eqEB}
\end{align}
\underline{\textbf{Passage to the limit:}}
The energy identity \eqref{eqEB} yields the uniform bounds
\begin{align}
u^N \text{ in }
L^\infty(0,T;L^2(\Omega_f))
\cap
L^2(0,T;H^1(\Omega_f)),\;
w^N \text{ in }
L^\infty(0,T;H^1(\Omega_s)),\;
w_t^N \text{ in }
L^\infty(0,T;L^2(\Omega_s)).
\label{2.29S1}
\end{align}
 By the Banach-Alaoglu theorem, from \eqref{2.29S1}, there exists
$
(u,w,w_t)$
and a subsequence (still indexed by $N$) such that
\begin{align}
u^N \overset{*}{\rightharpoonup} u
&\quad\text{in } L^\infty(0,T;L^2(\Omega_f)), \label{coul}\\
u^N \rightharpoonup u
&\quad\text{in } L^2(0,T;H^1(\Omega_f)), \label{45PQn}\\
\nabla u^N \rightharpoonup \nabla u
&\quad\text{in } L^2(0,T;L^2(\Omega_f)), \label{2Aqzp}\\
w^N \overset{*}{\rightharpoonup} w
&\quad\text{in } L^\infty(0,T;H^1(\Omega_s)), \label{OTep2}\\
\nabla w^N \overset{*}{\rightharpoonup} \nabla w
&\quad\text{in } L^\infty(0,T;L^2(\Omega_s)), \label{ONW2}\\
w_t^N \overset{*}{\rightharpoonup} w_t
&\quad\text{in } L^\infty(0,T;L^2(\Omega_s)), \label{conwl2}
\end{align}
Fix $M\in\mathbb{N}$, let
$
(\varphi_f,\varphi_s)\in\mathcal V_M,
$
where
$
\mathcal V_M
:=
\operatorname{span}
\{(\Phi_1,\Psi_1),\dots,(\Phi_M,\Psi_M)\},
$ and let $\psi\in C^\infty([0,T])$ satisfy $\psi(T)=0$. Since $\mathcal {V}_M\subset \mathcal{V}_N$ for every $N\ge M$.
Multiplying \eqref{1.0a} by $\varphi_f\psi(t)$ and \eqref{1.0b} by
$\varphi_s\psi(t)$, integrating over $\Omega_f\times(0,T)$ and
$\Omega_s\times(0,T)$, respectively, and using
$\partial\Omega_f=\Gamma_f\cup\Gamma_s$, $\partial\Omega_s=\Gamma_{s,0}\cup\Gamma_s$,
$\varphi_f=0$ on $\Gamma_f$, $\varphi_s=0$ on $\Gamma_{s,0}$, and the normal
coupling cancellation on $\Gamma_s$, we obtain
\begin{align}
\int_0^T
(u_t^N,\varphi_f)_{\Omega_f}\psi(t)\,dt
&+
\int_0^T
(\nabla u^N,\nabla\varphi_f)_{\Omega_f}\psi(t)\,dt+
\int_0^T
(w_{tt}^N,\varphi_s)_{\Omega_s}\psi(t)\,dt
\nonumber\\
&+
\int_0^T
(\nabla w^N,\nabla\varphi_s)_{\Omega_s}\psi(t)\,dt
+
\int_0^T
(w^N,\varphi_s)_{\Omega_s}\psi(t)\,dt
=
0.
\label{bIBP}
\end{align}
We now integrate the time derivatives by parts. Since
$\psi(T)=0$, we obtain
\begin{align}
\int_0^T
(u_t^N,\varphi_f)_{\Omega_f}\psi(t)\,dt
&=
-\int_0^T
(u^N,\varphi_f)_{\Omega_f}\psi'(t)\,dt
-
(u_0^N,\varphi_f)_{\Omega_f}\psi(0),
\label{IBP-u}
\end{align}
and
\begin{align}
\int_0^T
(w_{tt}^N,\varphi_s)_{\Omega_s}\psi(t)\,dt
&=
-\int_0^T
(w_t^N,\varphi_s)_{\Omega_s}\psi'(t)\,dt
-
(w_1^N,\varphi_s)_{\Omega_s}\psi(0).
\label{IBP-w}
\end{align}
Substituting \eqref{IBP-u} and \eqref{IBP-w} into
\eqref{bIBP} yields
\begin{align}
0
&=
-\int_0^T
(u^N(t),\varphi_f)_{\Omega_f}\psi'(t)\,dt
-
(u_0^N,\varphi_f)_{\Omega_f}\psi(0)
\nonumber\\
&\quad
+
\int_0^T
(\nabla u^N(t),\nabla\varphi_f)_{\Omega_f}\psi(t)\,dt
\nonumber\\
&\quad
-
\int_0^T
(w_t^N(t),\varphi_s)_{\Omega_s}\psi'(t)\,dt
-
(w_1^N,\varphi_s)_{\Omega_s}\psi(0)
\nonumber\\
&\quad
+
\int_0^T
(\nabla w^N(t),\nabla\varphi_s)_{\Omega_s}\psi(t)\,dt
+
\int_0^T
(w^N(t),\varphi_s)_{\Omega_s}\psi(t)\,dt .
\label{GIN}
\end{align}
Using
\eqref{coul}-\eqref{conwl2}, \eqref{OPE56}, and passing to the limit $N\to\infty$ in
\eqref{GIN}, we obtain
\begin{align}
0
&=
-\int_0^T
(u(t),\varphi_f)_{\Omega_f}\psi'(t)\,dt
-
(u_0,\varphi_f)_{\Omega_f}\psi(0)
\nonumber\\
&\quad
+
\int_0^T
(\nabla u(t),\nabla\varphi_f)_{\Omega_f}\psi(t)\,dt
\nonumber\\
&\quad
-
\int_0^T
(w_t(t),\varphi_s)_{\Omega_s}\psi'(t)\,dt
-
(w_1,\varphi_s)_{\Omega_s}\psi(0)
\nonumber\\
&\quad
+
\int_0^T
(\nabla w(t),\nabla\varphi_s)_{\Omega_s}\psi(t)\,dt
+
\int_0^T
(w(t),\varphi_s)_{\Omega_s}\psi(t)\,dt .
\label{lire}
\end{align} 
Since $M$ was arbitrary, \eqref{lire} holds for every
$
(\varphi_f,\varphi_s)\in\bigcup_{M=1}^{\infty}\mathcal V_M.
$
By \eqref{VN-dense}, for any $(\varphi_f,\varphi_s)\in\mathcal V$, there
exists a sequence
$$
(\varphi_f^m,\varphi_s^m)\in\bigcup_{M=1}^{\infty}\mathcal V_M
$$
such that
$$
(\varphi_f^m,\varphi_s^m)\to(\varphi_f,\varphi_s)
\qquad
\text{in }\mathcal V.
$$
Passing to the limit $m\to\infty$ in \eqref{lire} with
$(\varphi_f^m,\varphi_s^m)$ in place of
$(\varphi_f,\varphi_s)$, we obtain \eqref{lire} for every
$$
(\varphi_f,\varphi_s)\in\mathcal V.
$$
\medskip

\noindent \noindent
We next recover the interface coupling conditions. From \eqref{45PQn}, we obtain
$
u\in L^2(0,T;H^1(\Omega_f)),
$
hence, by the Sobolev trace theorem,
\begin{align}
u\rvert_{\Gamma_s}
\in
L^2(0,T;H^{1/2}(\Gamma_s)).
\label{2.71GH5}
\end{align}
By \eqref{45PQn} and the continuity of the trace operator, we obtain
\begin{align}
u^N\rvert_{\Gamma_s}
\rightharpoonup
u\rvert_{\Gamma_s}
\qquad
\text{weakly in }
L^2(0,T;H^{1/2}(\Gamma_s)).
\label{TL1}
\end{align}
Using \eqref{WGRq} and \eqref{TL1}, we define the trace of $w_t$ on $\Gamma_s$ by
\begin{align}
w_t\rvert_{\Gamma_s}
:=
u\rvert_{\Gamma_s}
\qquad
\text{in }
L^2(0,T;H^{1/2}(\Gamma_s)),
\end{align}
which implies that
\begin{align}
w_t\rvert_{\Gamma_s}
=
u\rvert_{\Gamma_s}
\end{align}
holds in the Galerkin trace sense.

\medskip

\noindent
The variational identity \eqref{1.33H} is obtained from the Galerkin formulation after cancellation of the boundary coupling terms and therefore incorporates the normal coupling condition in the weak sense.
\medskip

\noindent \underline{\textbf{Time derivative estimate and recovery of the weak formulation:}} For every
$
(\varphi_f,\varphi_s)\in\mathcal V_N,
$ \eqref{Gw} gives
\begin{align}
(u_t^N,\varphi_f)_{\Omega_f}
+
(w_{tt}^N,\varphi_s)_{\Omega_s}
&=
-
(\nabla u^N,\nabla\varphi_f)_{\Omega_f}
-
(\nabla w^N,\nabla\varphi_s)_{\Omega_s}
-
(w^N,\varphi_s)_{\Omega_s},
\label{det-VN}
\end{align}
from which it follows that
\begin{align}
\rvert
(u_t^N,\varphi_f)_{\Omega_f}
+
(w_{tt}^N,\varphi_s)_{\Omega_s}
\rvert
&\le
\lVert \nabla u^N\rVert_{L^2(\Omega_f)}
\lVert \nabla\varphi_f\rVert_{L^2(\Omega_f)}
\nonumber\\
&\quad
+
\lVert \nabla w^N\rVert_{L^2(\Omega_s)}
\lVert \nabla\varphi_s\rVert_{L^2(\Omega_s)}
\nonumber\\
&\quad
+
\lVert w^N\rVert_{L^2(\Omega_s)}
\lVert \varphi_s\rVert_{L^2(\Omega_s)}
\nonumber\\
&\le
C
\left(
\lVert u^N\rVert_{H^1(\Omega_f)}
+
\lVert w^N\rVert_{H^1(\Omega_s)}
\right)
\lVert(\varphi_f,\varphi_s)\rVert_{\mathcal V},
\label{Due2-VN}
\end{align}
taking the supremum over
$
0\neq(\varphi_f,\varphi_s)\in\mathcal V_N
$
with respect to the $\mathcal V$-norm, we obtain
\begin{align}
\lVert (u_t^N,w_{tt}^N)\rVert_{\mathcal V_N'}
&\le
C
\left(
\lVert u^N\rVert_{H^1(\Omega_f)}
+
\lVert w^N\rVert_{H^1(\Omega_s)}
\right),
\label{DB-VN}
\end{align}
where $\mathcal V_N'$ denotes the dual of $ \mathcal V_N$ equipped with the norm induced by $\mathcal V$. Therefore, by \eqref{2.29S1},
\begin{align}
\int_0^T
\lVert (u_t^N(t),w_{tt}^N(t))\rVert_{\mathcal V_N'}^2\,dt
\le
C.
\label{Df-VN}
\end{align}
Since the spaces $\mathcal V_N'$ depend on $N$, \eqref{Df-VN} is only a Galerkin-level estimate. To obtain compactness in a fixed dual space, we introduce the corresponding residual functional on $\mathcal V$.

\medskip

\noindent
For each $t\in(0,T)$, define $F_N(t)\in\mathcal V'$ by
\begin{align}
\langle
F_N(t),(\varphi_f,\varphi_s)
\rangle_{\mathcal V',\mathcal V}
:=
-
(\nabla u^N(t),\nabla\varphi_f)_{\Omega_f}
-
(\nabla w^N(t),\nabla\varphi_s)_{\Omega_s}
-
(w^N(t),\varphi_s)_{\Omega_s}
\label{FN-def}
\end{align}
for every
$
(\varphi_f,\varphi_s)\in\mathcal V,
$
which yields
\begin{align}
\lvert
\left\langle
F_N(t),(\varphi_f,\varphi_s)
\right\rangle_{\mathcal V',\mathcal V}
\rvert
&\le
C
\left(
\lVert u^N(t)\rVert_{H^1(\Omega_f)}
+
\lVert w^N(t)\rVert_{H^1(\Omega_s)}
\right)
\lVert(\varphi_f,\varphi_s)\rVert_{\mathcal V},
\end{align}
and, taking the supremum over all
$
(\varphi_f,\varphi_s)\in\mathcal V
$
with
$
\lVert(\varphi_f,\varphi_s)\rVert_{\mathcal V}\le1,
$
gives
\begin{align}
\lVert F_N(t)\rVert_{\mathcal V'}
&\le
C
\left(
\lVert u^N(t)\rVert_{H^1(\Omega_f)}
+
\lVert w^N(t)\rVert_{H^1(\Omega_s)}
\right).
\label{REWQj}
\end{align}
Squaring \eqref{REWQj}, integrating over $(0,T)$, and using \eqref{2.29S1}, from \eqref{FN-def}, we obtain
\begin{align}
\int_0^T
\lVert F_N(t)\rVert_{\mathcal V'}^2\,dt
\le
C,
\label{4.48HK}
\end{align}
which implies that $\{F_N\}_{N\ge1}$ is bounded in $L^2(0,T;\mathcal V')$. Since $L^2(0,T;\mathcal V')$ is a Hilbert space, the Banach-Alaoglu theorem gives a subsequence, still denoted by $N$, and an element
$
G\in L^2(0,T;\mathcal V')
$
such that
\begin{align}
F_N
\rightharpoonup
G
\qquad
\text{weakly in }L^2(0,T;\mathcal V').
\label{G-limit}
\end{align}
We now identify $G$ as the weak time derivative of
$
Z:=(u,w_t).$
Fix
$
(\varphi_f,\varphi_s)\in\mathcal V$
and $
\eta\in C_c^\infty(0,T).$
By the density of
$
\bigcup_{N=1}^{\infty}\mathcal V_N$
in $\mathcal V$, there exists
$
(\varphi_f^M,\varphi_s^M)\in\mathcal V_M
$
such that
\begin{align}
(\varphi_f^M,\varphi_s^M)
\to
(\varphi_f,\varphi_s)
\qquad
\text{strongly in }\mathcal V
\qquad
\text{as }M\to\infty.
\label{2.59M}
\end{align}
For fixed $M$, take $N\ge M$. Then for
$
(\varphi_f^M,\varphi_s^M)\in\mathcal V_N,
$ using \eqref{FN-def},  \eqref{det-VN}, we obtain
\begin{align}
(u_t^N,\varphi_f^M)_{\Omega_f}
+
(w_{tt}^N,\varphi_s^M)_{\Omega_s}
= \langle
F_N,(\varphi_f^M,\varphi_s^M)
\rangle_{\mathcal V',\mathcal V}.
\label{det-M}
\end{align}
Multiplying \eqref{det-M} by $\eta(t)$, integrating over $(0,T)$, and integrating by parts in time, we get
\begin{align}
-\int_0^T
\Big[
(u^N(t),\varphi_f^M)_{\Omega_f}
+
(w_t^N(t),\varphi_s^M)_{\Omega_s}
\Big]\eta'(t)\,dt
&=
\int_0^T
\left\langle
F_N(t),(\varphi_f^M,\varphi_s^M)
\right\rangle_{\mathcal V',\mathcal V}
\eta(t)\,dt.
\label{G1-M}
\end{align}
 Since $M$ is fixed, the test pair in \eqref{G1-M} is independent of $N$. Passing to the limit $N\to\infty$, using \eqref{coul}, \eqref{conwl2}, and \eqref{G-limit}, gives
\begin{align}
-\int_0^T
\Big[
(u(t),\varphi_f^M)_{\Omega_f}
+
(w_t(t),\varphi_s^M)_{\Omega_s}
\Big]\eta'(t)\,dt
&=
\int_0^T
\left\langle
G(t),(\varphi_f^M,\varphi_s^M)
\right\rangle_{\mathcal V',\mathcal V}
\eta(t)\,dt.
\label{G2-M}
\end{align}
Letting $M\to\infty$ in \eqref{G2-M}, using \eqref{2.59M}, yields
\begin{align}
-\int_0^T
\Big[
(u(t),\varphi_f)_{\Omega_f}
+
(w_t(t),\varphi_s)_{\Omega_s}
\Big]\eta'(t)\,dt
&=
\int_0^T
\left\langle
G(t),(\varphi_f,\varphi_s)
\right\rangle_{\mathcal V',\mathcal V}
\eta(t)\,dt.
\label{G2}
\end{align}
By the definition of weak derivative, \eqref{G2}
implies that $G$ is the weak time derivative of $Z=(u,w_t)$ in $\mathcal V'$. Hence
\begin{align}
\partial_t(u,w_t)
=
G
\qquad
\text{in }L^2(0,T;\mathcal V').
\label{Zt-G}
\end{align}
It remains to identify the equation satisfied by $G$. Since
$
F_N\rightharpoonup G
$
weakly in
$
L^2(0,T;\mathcal V')
$ from \eqref{4.48HK}
and
$
\eta(\varphi_f,\varphi_s)\in L^2(0,T;\mathcal V),
$
we have
\begin{align}
\int_0^T
\langle
F_N(t),\eta(t)(\varphi_f,\varphi_s)
\rangle_{\mathcal V',\mathcal V}
\,dt
\to
\int_0^T
\left\langle
G(t),\eta(t)(\varphi_f,\varphi_s)
\right\rangle_{\mathcal V',\mathcal V}
\,dt.
\label{2ub2.65}
\end{align}
On the other hand, by ,
\begin{align}
\int_0^T
\langle
F_N(t),\eta(t)(\varphi_f,\varphi_s)
\rangle_{\mathcal V',\mathcal V}
\,dt
&=
-\int_0^T
(\nabla u^N(t),\nabla\varphi_f)_{\Omega_f}\eta(t)\,dt
\nonumber\\
&\quad
-\int_0^T
(\nabla w^N(t),\nabla\varphi_s)_{\Omega_s}\eta(t)\,dt
-
\int_0^T
(w^N(t),\varphi_s)_{\Omega_s}\eta(t)\,dt.
\label{FN2.66}
\end{align}
Combining 
\eqref{2ub2.65} with \eqref{FN2.66}, we obtain
\begin{align}
\int_0^T
\left\langle
G(t),(\varphi_f,\varphi_s)
\right\rangle_{\mathcal V',\mathcal V}
\eta(t)\,dt
&=
-\int_0^T
(\nabla u(t),\nabla\varphi_f)_{\Omega_f}\eta(t)\,dt
\nonumber\\
&\quad
-\int_0^T
(\nabla w(t),\nabla\varphi_s)_{\Omega_s}\eta(t)\,dt
-
\int_0^T
(w(t),\varphi_s)_{\Omega_s}\eta(t)\,dt.
\label{Eq2.67H}
\end{align} 
Using \eqref{Zt-G} in \eqref{Eq2.67H}, we obtain
\begin{align}
\int_0^T
\Big[
\langle
\partial_t(u,w_t)(t),
(\varphi_f,\varphi_s)
\rangle_{\mathcal V',\mathcal V}
+
(\nabla u(t),\nabla\varphi_f)_{\Omega_f}
+
(\nabla w(t),\nabla\varphi_s)_{\Omega_s}
+
(w(t),\varphi_s)_{\Omega_s}
\Big]\eta(t)\,dt
=
0.
\label{BCF3}
\end{align}
The bracketed expression in \eqref{BCF3} belongs to $L^1(0,T)$. Indeed,
\begin{align}
\lvert 
\langle
\partial_t(u,w_t)(t),
(\varphi_f,\varphi_s)
\rangle_{\mathcal V',\mathcal V}
\rvert
&\le
\lVert \partial_t(u,w_t)(t)\rVert_{\mathcal V'}
\lVert(\varphi_f,\varphi_s)\rVert_{\mathcal V},
\end{align}
and the remaining terms are integrable because
$
u\in L^2(0,T;H^1(\Omega_f))
$
and
$
w\in L^\infty(0,T;H^1(\Omega_s)).
$
Since
$
\eta\in C_c^\infty(0,T)
$
is arbitrary, the fundamental lemma of the calculus of variations gives
\begin{align}
\langle
\partial_t(u,w_t)(t),
(\varphi_f,\varphi_s)
\rangle_{\mathcal V',\mathcal V}
+
(\nabla u(t),\nabla\varphi_f)_{\Omega_f}
+
(\nabla w(t),\nabla\varphi_s)_{\Omega_s}
+
(w(t),\varphi_s)_{\Omega_s}
=
0,
\label{WNCW}
\end{align}
for a.e. $t\in(0,T)$ and every fixed
$
(\varphi_f,\varphi_s)\in\mathcal V.
$
\medskip

\noindent \underline{\textbf{Energy inequality for the limit:}}
 Let
$
X^N(t):=(u^N(t),w^N(t),w_t^N(t))
$
and
$
X_0^N:=(u_0^N,w_0^N,w_1^N),
$
then
$$
\lVert X^N(t)\rVert_{\mathbb H}^2
=
\lVert u^N(t)\rVert_{L^2(\Omega_f)}^2
+
\lVert w^N(t)\rVert_{H^1(\Omega_s)}^2
+
\lVert w_t^N(t)\rVert_{L^2(\Omega_s)}^2.
$$
It follows from  \eqref{eqEB} that, for every $t\in[0,T]$,
\begin{align}
\frac12\lVert X^N(t)\rVert_{\mathbb H}^2
+
\int_0^t
\lVert \nabla u^N(\tau)\rVert_{L^2(\Omega_f)}^2\,d\tau
=
\frac12\lVert X_0^N\rVert_{\mathbb H}^2.
\label{2.38FSIW}
\end{align}
Let
$\psi\in C_c^\infty(0,T),
\;
\psi\ge0.
$
Multiplying \eqref{2.38FSIW} by $\psi(t)$ and integrating over $(0,T)$, we obtain
\begin{align}
\int_0^T
\Big[
\frac12\lVert X^N(t)\rVert_{\mathbb H}^2
+
\int_0^t
\lVert \nabla u^N(\tau)\rVert_{L^2(\Omega_f)}^2\,d\tau
-
\frac12\lVert X_0^N\rVert_{\mathbb H}^2
\Big]\psi(t)\,dt
=0.
\label{2.39FER}
\end{align}
Using \eqref{coul}, \eqref{OTep2}, and \eqref{conwl2}, we have
$$
X^N\rightharpoonup X
\qquad
\text{weakly in }L^2(0,T;\mathbb H).
$$
Since $\psi\ge0$, weak lower semicontinuity gives
\begin{align}
\int_0^T
\psi(t)\lVert X(t)\rVert_{\mathbb H}^2\,dt
\le
\liminf_{N\to\infty}
\int_0^T
\psi(t)\lVert X^N(t)\rVert_{\mathbb H}^2\,dt.
\label{2.4YwXC}
\end{align}
For the dissipation term in \eqref{2.39FER}, weak lower semicontinuity in \eqref{2.4YwXC} applied to
$$
\nabla u^N\rightharpoonup \nabla u
\qquad
\text{in }L^2(0,T;L^2(\Omega_f))
$$
which implies, for a.e. $t\in(0,T)$,
\begin{align}
\int_0^t
\lVert \nabla u(\tau)\rVert_{L^2(\Omega_f)}^2\,d\tau
\le
\liminf_{N\to\infty}
\int_0^t
\lVert \nabla u^N(\tau)\rVert_{L^2(\Omega_f)}^2\,d\tau.
\end{align}
Since $\psi\ge0$, Fatou's lemma yields
\begin{align}
\int_0^T
\psi(t)
\int_0^t
\lVert \nabla u(\tau)\rVert_{L^2(\Omega_f)}^2\,d\tau\,dt
&\le
\int_0^T
\psi(t)
\liminf_{N\to\infty}
\int_0^t
\lVert \nabla u^N(\tau)\rVert_{L^2(\Omega_f)}^2\,d\tau\,dt
\nonumber\\
&\le
\liminf_{N\to\infty}
\int_0^T
\psi(t)
\int_0^t
\lVert \nabla u^N(\tau)\rVert_{L^2(\Omega_f)}^2\,d\tau\,dt.
\label{2.42DS}
\end{align}
By \eqref{OPE56},
$
X_0^N\to X_0
\;
\text{strongly in }\mathbb H,$
therefore
\begin{align}
\lVert X_0^N\rVert_{\mathbb H}^2
\to
\lVert X_0\rVert_{\mathbb H}^2.
\label{ENL2.43}
\end{align}
Passing to the lower limit in \eqref{2.39FER} and using
\eqref{2.4YwXC}, \eqref{2.42DS}, and \eqref{ENL2.43}, we obtain
\begin{align}
\int_0^T
\Big[
\frac12\lVert X(t)\rVert_{\mathbb H}^2
+
\int_0^t
\lVert \nabla u(\tau)\rVert_{L^2(\Omega_f)}^2\,d\tau
-
\frac12\lVert X_0\rVert_{\mathbb H}^2
\Big]\psi(t)\,dt
\le0.
\label{2.44PSI}
\end{align}
Since $\psi\in C_c^\infty(0,T)$, $\psi\ge0$, is arbitrary, and using the fundamental lemma of the calculus of variations, \eqref{2.44PSI} implies
\begin{align}
\frac12\lVert X(t)\rVert_{\mathbb H}^2
+
\int_0^t
\lVert \nabla u(\tau)\rVert_{L^2(\Omega_f)}^2\,d\tau
\le
\frac12\lVert X_0\rVert_{\mathbb H}^2
\label{4GHKL}
\end{align}
for a.e. $t\in(0,T)$.
\medskip

\noindent \underline{\textbf{Weak continuity of the energy space:}} Define
$
Y(t):=(u(t),w_t(t)).
$
By \eqref{Hspace},
$
\lVert Y(t)\rVert_{\mathcal H}^2
=
\|u(t)\|_{L^2(\Omega_f)}^2
+
\|w_t(t)\|_{L^2(\Omega_s)}^2 .
$
Hence, by \eqref{4GHKL},
$
\|Y(t)\|_{\mathcal H}^2
\le
\|X(t)\|_{\mathbb H}^2 ,
$
and therefore
$
Y\in L^\infty(0,T;\mathcal H).
$
Moreover, \eqref{Zt-G} gives
\begin{align}
Y_t=\partial_t(u,w_t)\in L^2(0,T;\mathcal V').
\label{YtVprime}
\end{align}
Since
$
\mathcal H\hookrightarrow \mathcal V'
$
continuously, we also have
$
Y\in L^2(0,T;\mathcal V').
$
Thus
$
Y\in W^{1,2}(0,T;\mathcal V'),
$
and consequently
$
Y\in C([0,T];\mathcal V').
$

\noindent Since strong continuity implies weak continuity, we conclude that
$
Y\in C_s(0,T;\mathcal V').
$ We now apply Lions' Lemma
\cite[Lemma~8.1, Chapter~8]{LionsMagenes1972}
with
$
X=\mathcal H,
\;
Y=\mathcal V'.
$
Since $\mathcal H$ is a Hilbert space, it is reflexive. In addition,
$
\mathcal H\hookrightarrow \mathcal V'
$
continuously. Therefore,
$
Y\in L^\infty(0,T;\mathcal H)
\cap
C_s(0,T;\mathcal V')
$
implies
$
Y\in C_s(0,T;\mathcal H).
$
Equivalently,
$
Y\in C_w([0,T];\mathcal H).
$
That is,
\begin{align}
(u,w_t)
\in
C_w\big([0,T];
L^2(\Omega_f)\times L^2(\Omega_s)
\big).
\label{uwc}
\end{align}
We next establish the strong $L^2(\Omega_s)$-continuity of $w$. Since the Galerkin approximations satisfy
$
w_t^N=\partial_t w^N,
$
for every $\phi\in L^2(\Omega_s)$ and every
$\eta\in C_c^\infty(0,T)$ we have,
\begin{align}
-\int_0^T
(w^N(t),\phi)_{\Omega_s}\eta'(t)\,dt
=
\int_0^T
(w_t^N(t),\phi)_{\Omega_s}\eta(t)\,dt.
\label{wN-dist}
\end{align}
Passing to the limit in \eqref{wN-dist} by
\eqref{OTep2} and \eqref{conwl2}, w
we obtain
\begin{align}
-\int_0^T
(w(t),\phi)_{\Omega_s}\eta'(t)\,dt
=
\int_0^T
(w_t(t),\phi)_{\Omega_s}\eta(t)\,dt,
\label{w-dist}
\end{align}
from which we obtain
$$
\partial_t w=w_t
\qquad
\text{in }\mathcal D'(0,T;L^2(\Omega_s)).
$$
Since \eqref{w-dist} shows that $w_t$ is the distributional time derivative of $w$, define
\begin{align}
\widetilde w(t)
:=
w_0
+
\int_0^t w_t(s)\,ds,
\qquad
0\le t\le T.
\label{qe2.75}
\end{align}
Since
$
w_t\in L^\infty(0,T;L^2(\Omega_s))
$
and $T<\infty$, we have
$$
w_t\in L^1(0,T;L^2(\Omega_s)),
$$
so the integral in \eqref{qe2.75} is well defined as an
$L^2(\Omega_s)$-valued integral. Moreover, for any $r,t\in[0,T]$,
\begin{align}
\widetilde w(t)-\widetilde w(r)
&=
\int_r^t w_t(s)\,ds
\qquad
\text{in }L^2(\Omega_s),
\end{align}
which yields
\begin{align}
\lVert
\widetilde w(t)-\widetilde w(r)
\rVert_{L^2(\Omega_s)}
&\le
\int_r^t
\lVert w_t(s)\rVert_{L^2(\Omega_s)}\,ds.
\label{34KWxz}
\end{align}
Since
$
\lVert w_t(\cdot)\rVert_{L^2(\Omega_s)}
\in L^1(0,T),
$
it follows that
$$
\int_r^t
\lVert w_t(s)\rVert_{L^2(\Omega_s)}\,ds
\to0
\qquad
\text{as }\lvert t-r\rvert\to0,
$$
and therefore \eqref{34KWxz} implies
$$
\lVert
\widetilde w(t)-\widetilde w(r)
\rVert_{L^2(\Omega_s)}
\to0
\qquad
\text{as }\lvert t-r\rvert\to0,
$$
showing that
$
\widetilde w
$
is continuous as an
$
L^2(\Omega_s)
$
-valued function on
$
[0,T],
$
that is,
\begin{align}
\widetilde w\in C([0,T];L^2(\Omega_s)).
\end{align}
\noindent
We also record the distributional time derivative of $ \widetilde w$.
Let $\phi\in L^2(\Omega_s)$ be fixed and define
$
a(t):=(\widetilde w(t),\phi)_{\Omega_s},
\;
b(t):=(w_t(t),\phi)_{\Omega_s}.
$
It follows from \eqref{qe2.75} that
\begin{align}
a(t)
=
(w_0,\phi)_{\Omega_s}
+
\int_0^t b(s)\,ds .
\label{12C}
\end{align}
Since
$
w_t\in L^1(0,T;L^2(\Omega_s)),
$
we have
\begin{align}
\int_0^T \lvert b(s)\rvert\,ds
&=
\int_0^T
\lvert 
(w_t(s),\phi)_{\Omega_s}
\rvert\,ds
\nonumber\\
&\le
\int_0^T
\lVert w_t(s)\rVert_{L^2(\Omega_s)}
\lVert \phi\rVert_{L^2(\Omega_s)}
\,ds
\nonumber\\
&=
\lVert \phi\rVert_{L^2(\Omega_s)}
\int_0^T
\lVert w_t(s)\rVert_{L^2(\Omega_s)}
\,ds
<\infty,
\label{b-L1}
\end{align}
which implies that
$
b\in L^1(0,T).
$ Therefore, by the fundamental theorem of calculus for Lebesgue integrals,
\eqref{12C} implies that $a$ is absolutely continuous on $[0,T]$ and
$$
a'(t)=b(t)
\quad
\text{for a.e. }t\in(0,T).
$$
By the definition of weak derivative, for every
$\eta\in C_c^\infty(0,T)$,
\begin{align}
\int_0^T a(t)\eta'(t)\,dt
=
-\int_0^T a'(t)\eta(t)\,dt .
\end{align}
Substituting
$a'(t)=b(t)$,
we obtain
\begin{align}
-\int_0^T
(\widetilde w(t),\phi)_{\Omega_s}
\eta'(t)\,dt
&=
-\int_0^T
a(t)\eta'(t)\,dt
\nonumber\\
&=
\int_0^T
a'(t)\eta(t)\,dt
\nonumber\\
&=
\int_0^T
(w_t(t),\phi)_{\Omega_s}
\eta(t)\,dt .
\label{4.hy5}
\end{align}
Since $\phi\in L^2(\Omega_s)$ was arbitrary, it follows from \eqref{4.hy5} that
\begin{align}
\partial_t\widetilde w
=
w_t
\qquad
\text{in }
\mathcal D'(0,T;L^2(\Omega_s)).
\end{align}
\noindent
We now prove that the weak limit $w$ agrees with $\widetilde w$ as an
element of $L^2(0,T;L^2(\Omega_s))$. For every $\phi\in L^2(\Omega_s)$ and
every $\rho\in C_c^\infty(0,T)$, the identity
$
w_t^N=\partial_t w^N
$
together with
$
w^N(0)=w_0^N
$
gives
\begin{align}
w^N(t)=w_0^N+\int_0^t w_t^N(s)\,ds.
\label{v4rt}
\end{align}
Multiplying \eqref{v4rt} by $\rho(t)$, pairing with $\phi$, integrating
over $(0,T)$, and applying Fubini's theorem, we obtain
\begin{align}
\int_0^T
(w^N(t),\phi)_{\Omega_s}\rho(t)\,dt
&=
\int_0^T
(w_0^N,\phi)_{\Omega_s}\rho(t)\,dt
\nonumber\\
&\quad
+
\int_0^T
(w_t^N(s),\phi)_{\Omega_s}
\left(\int_s^T \rho(t)\,dt\right)\,ds.
\label{.2.84eq}
\end{align}
Passing to the limit in \eqref{.2.84eq}, using
\eqref{OTep2}, \eqref{conwl2} and $w_0^N \to w_0 \; \textrm{in} \; L^2(\Omega_s)$
we obtain
\begin{align}
\int_0^T
(w(t),\phi)_{\Omega_s}\rho(t)\,dt
&=
\int_0^T
(w_0,\phi)_{\Omega_s}\rho(t)\,dt
\nonumber\\
&\quad
+
\int_0^T
(w_t(s),\phi)_{\Omega_s}
\left(\int_s^T \rho(t)\,dt\right)\,ds.
\label{2.85WE}
\end{align}
By \eqref{qe2.75}, the right-hand side of \eqref{2.85WE} is equal to
$
\int_0^T
(\widetilde w(t),\phi)_{\Omega_s}\rho(t)\,dt,
$
therefore we have,
\begin{align}
\int_0^T
(w(t)-\widetilde w(t),\phi)_{\Omega_s}
\rho(t)\,dt
=0,
\label{FGT2}
\end{align}
for every
$
\phi\in L^2(\Omega_s)
$
and every
$
\rho\in C_c^\infty(0,T).
$
From \eqref{FGT2}, by the fundamental lemma of the calculus of variations, we obtain
$$
(w(t)-\widetilde w(t),\phi)_{\Omega_s}=0,
$$
for a.e. $t\in(0,T)$. Since
$
\phi\in L^2(\Omega_s)
$
was arbitrary, we conclude that
$$
w=\widetilde w
\qquad
\text{in }
L^2(0,T;L^2(\Omega_s)).
$$
From this point on, we identify $\widetilde w$ with $w$ and denote it again by $w$. Consequently,
\begin{align}
w(t)
=
w_0
+
\int_0^t w_t(s)\,ds
\qquad
\text{in }L^2(\Omega_s),
\quad
0\le t\le T,
\label{2.87EQW}
\end{align}
from which, for any $r,t\in[0,T]$, we obtain
\begin{align}
w(r)-w(t)
=
\int_t^r w_t(s)\,ds
\qquad
\text{in }L^2(\Omega_s).
\end{align}
Using
$
w_t\in L^\infty(0,T;L^2(\Omega_s))$, we obtain
\begin{align}
\lVert w(r)-w(t)\rVert_{L^2(\Omega_s)}
&\le
\int_t^r
\lVert w_t(s)\rVert_{L^2(\Omega_s)}\,ds
\nonumber\\
&\le
\lvert r-t\rvert
\lVert w_t\rVert_{L^\infty(0,T;L^2(\Omega_s))},
\label{cont-est}
\end{align}
which implies
\begin{align}
w\in C([0,T];L^2(\Omega_s)).
\label{wslc}
\end{align}
We now prove that
$
w\in C_w([0,T];H^1(\Omega_s)).
$
Since
$
w\in L^\infty(0,T;H^1(\Omega_s)),
$
we have
\begin{align}
\lVert w(r)\rVert_{H^1(\Omega_s)}
\le C
\qquad
\text{for a.e. }r\in(0,T). \label{vx2H1}\end{align}
Therefore, using \eqref{vx2H1}, for any fixed,
$
t\in[0,T],
$
we may choose a sequence
$
\{r_n\}\subset(0,T)
$
such that
$
r_n\to t
$
and
$$
\lVert w(r_n)\rVert_{H^1(\Omega_s)}
\le C
\qquad
\text{for every }n.
$$
In addition, since
$
\{w(r_n)\}_{n\ge1}
$
is bounded in the reflexive space
$
H^1(\Omega_s),$
 by Banach-Alaoglu, 
there exist a subsequence, not relabeled, and a function
$
z\in H^1(\Omega_s)
$
such that
\begin{align}
w(r_n)
\rightharpoonup
z
\qquad
\text{weakly in }
H^1(\Omega_s).
\label{2.7g1}
\end{align}
Since the embedding $H^1(\Omega_s)\hookrightarrow L^2(\Omega_s)$ is compact,
the same subsequence satisfies
\begin{align}
w(r_n)\to z
\qquad
\text{strongly in }L^2(\Omega_s).
\label{sa2}
\end{align}
On the other hand, since
$
w\in C([0,T];L^2(\Omega_s))$ from \eqref{wslc} and $r_n\to t$, we have
\begin{align}
w(r_n)\to w(t)
\qquad
\text{strongly in }L^2(\Omega_s).
\label{cdq}
\end{align}
Therefore, it follows from \eqref{sa2} and \eqref{cdq} that
$$
z=w(t)
\qquad
\text{in }L^2(\Omega_s).
$$
Since
$
z=w(t)
\;
\text{in }L^2(\Omega_s),
$
and
$
z\in H^1(\Omega_s),
$
we identify $w(t)$ with $z$ as an element of
$
H^1(\Omega_s).
$  Moreover, using \eqref{2.7g1}, by weak lower semicontinuity,
$$
\lVert w(t)\rVert_{H^1(\Omega_s)}=\lVert z\rVert_{H^1(\Omega_s)}
\le
\liminf_{n\to\infty}
\lVert w(r_n)\rVert_{H^1(\Omega_s)}
\le C.
$$
Since $t\in[0,T]$ was arbitrary, we obtain
\begin{align}
\sup_{t\in[0,T]}
\lVert w(t)\rVert_{H^1(\Omega_s)}
\le C.
\label{12S7}
\end{align}
To prove weak continuity, fix $t\in[0,T]$ and let $\{t_n\}\subset[0,T]$ be an arbitrary sequence such that $t_n\to t$. By \eqref{12S7},
$
\{w(t_n)\}_{n\ge1}
$
is bounded in $H^1(\Omega_s)$. Let
$
\{w(t_{n_k})\}_{k\ge1}
$
be an arbitrary subsequence of
$
\{w(t_n)\}_{n\ge1}.
$
Since
$
\{w(t_{n_k})\}_{k\ge1}
$
is bounded in the reflexive space
$
H^1(\Omega_s),
$
there exist a further subsequence
$
\{w(t_{n_{k_j}})\}_{j\ge1}
$
and a function
$
z\in H^1(\Omega_s)
$
such that
\begin{align}
w(t_{n_{k_j}})
\rightharpoonup z
\qquad
\text{weakly in }H^1(\Omega_s).\label{sam}
\end{align}
By the compact embedding,
$
H^1(\Omega_s)\hookrightarrow L^2(\Omega_s),
$ it follows from \eqref{sam} that
\begin{align}
w(t_{n_{k_j}})
\to z
\qquad
\text{strongly in }L^2(\Omega_s).
\label{z12a}
\end{align}
Since
$
w\in C([0,T];L^2(\Omega_s))
$
from \eqref{wslc}, it follows that \begin{align}
w(t_{n_{k_j}})
\to
w(t)
\qquad
\text{strongly in }L^2(\Omega_s).
\label{cdxz}
\end{align}
Therefore, from \eqref{z12a} and \eqref{cdxz} we obtain
\begin{align}
z=w(t)
\qquad
\text{in }L^2(\Omega_s).
\label{lx2}
\end{align}
Thus, by \eqref{sam} and \eqref{lx2},
$$
w(t_{n_{k_j}})
\rightharpoonup
w(t)
\qquad
\text{weakly in }H^1(\Omega_s),
$$
which shows that every subsequence
$
\{w(t_{n_k})\}_{k\ge1}
$
of
$
\{w(t_n)\}_{n\ge1}
$
contains a further subsequence
$
\{w(t_{n_{k_j}})\}_{j\ge1}
$
converging weakly in
$
H^1(\Omega_s)
$
to
$
w(t).
$
Hence every subsequence
$
\{w(t_{n_k})\}_{k\ge1}
$
of
$
\{w(t_n)\}_{n\ge1}
$
contains a further subsequence
$
\{w(t_{n_{k_j}})\}_{j\ge1}
$
converging weakly in
$
H^1(\Omega_s)
$
to
$
w(t).
$
Consequently,
\begin{align}
w(t_n)
\rightharpoonup
w(t)
\qquad
\text{weakly in }H^1(\Omega_s).
\label{78bf}
\end{align}
Since
$
t_n\to t
$
was arbitrary, we conclude from \eqref{78bf} that
\begin{align}
w\in C_w([0,T];H^1(\Omega_s)).
\label{wwh1}
\end{align}

\noindent Define
$
X(t):=(u(t),w(t),w_t(t)).
$
From \eqref{uwc} and \eqref{wwh1}, we obtain
\begin{align}
X\in C_w([0,T];\mathbb H).
\label{xwc}
\end{align}
\underline{\textbf{Initial condition:}}  We first identify $u(0)$ and $w_t(0)$. From
\eqref{lire}, for every $(\varphi_f,\varphi_s)\in\mathcal V$ and every
$\psi\in C^\infty([0,T])$ with $ \psi(T)=0$, we have
\begin{align}
0
&=
-\int_0^T
(u,\varphi_f)_{\Omega_f}\psi'(t)\,dt
-
(u_0,\varphi_f)_{\Omega_f}\psi(0)
\nonumber\\
&\quad
-
\int_0^T
(w_t,\varphi_s)_{\Omega_s}\psi'(t)\,dt
-
(w_1,\varphi_s)_{\Omega_s}\psi(0)
\nonumber\\
&\quad
+
\int_0^T
(\nabla u,\nabla\varphi_f)_{\Omega_f}\psi(t)\,dt
+
\int_0^T
(\nabla w,\nabla\varphi_s)_{\Omega_s}\psi(t)\,dt
+
\int_0^T
(w,\varphi_s)_{\Omega_s}\psi(t)\,dt .
\label{inL}
\end{align}
On the other hand, by \eqref{WNCW}, multiplying by $\psi$ and integrating
over $(0,T)$, we obtain
\begin{align}
0
&=
\int_0^T
\langle
\partial_t(u,w_t)(t),
(\varphi_f,\varphi_s)
\rangle_{\mathcal V',\mathcal V}
\psi(t)\,dt
\nonumber\\
&\quad
+
\int_0^T
(\nabla u,\nabla\varphi_f)_{\Omega_f}\psi(t)\,dt
+
\int_0^T
(\nabla w,\nabla\varphi_s)_{\Omega_s}\psi(t)\,dt
+
\int_0^T
(w,\varphi_s)_{\Omega_s}\psi(t)\,dt .
\label{wft}
\end{align}
Since \eqref{uwc} and \eqref{YtVprime} imply
$
Y=(u,w_t)\in C_w([0,T];\mathcal H)
$
and
$
Y_t=\partial_t(u,w_t)\in L^2(0,T;\mathcal V'),
$
the initial values
$u(0)$
and
$w_t(0)
$
are well defined in the weak topology of
$
\mathcal H,
$
the weak integration-by-parts formula gives
\begin{align}
\int_0^T
\left\langle
\partial_t(u,w_t)(t),
(\varphi_f,\varphi_s)
\right\rangle_{\mathcal V',\mathcal V}
\psi(t)\,dt
&=
-\int_0^T
\Big[
(u,\varphi_f)_{\Omega_f}
+
(w_t,\varphi_s)_{\Omega_s}
\Big]\psi'(t)\,dt
\nonumber\\
&\quad
-
\Big[
(u(0),\varphi_f)_{\Omega_f}
+
(w_t(0),\varphi_s)_{\Omega_s}
\Big]\psi(0).
\label{IBP-Y}
\end{align}
Substituting \eqref{IBP-Y} into \eqref{wft}, we get
\begin{align}
0
&=
-\int_0^T
\Big[
(u,\varphi_f)_{\Omega_f}
+
(w_t,\varphi_s)_{\Omega_s}
\Big]\psi'(t)\,dt
\nonumber\\
&\quad
-
\Big[
(u(0),\varphi_f)_{\Omega_f}
+
(w_t(0),\varphi_s)_{\Omega_s}
\Big]\psi(0)
\nonumber\\
&\quad
+
\int_0^T
(\nabla u,\nabla\varphi_f)_{\Omega_f}\psi(t)\,dt
+
\int_0^T
(\nabla w,\nabla\varphi_s)_{\Omega_s}\psi(t)\,dt
+
\int_0^T
(w,\varphi_s)_{\Omega_s}\psi(t)\,dt .
\label{wft-IBP}
\end{align}
Comparing \eqref{wft-IBP} with \eqref{inL}, the integral terms cancel and we
obtain
\begin{align}
\Big[
(u(0),\varphi_f)_{\Omega_f}
+
(w_t(0),\varphi_s)_{\Omega_s}
\Big]\psi(0)
=
\Big[
(u_0,\varphi_f)_{\Omega_f}
+
(w_1,\varphi_s)_{\Omega_s}
\Big]\psi(0).
\label{F4a1}
\end{align}
Taking $\psi\in C^\infty([0,T])$ with $\psi(T)=0$ and $\psi(0)\neq0$,
we conclude from \eqref{F4a1} that
\begin{align}
\big(u(0)-u_0,\varphi_f\big)_{\Omega_f}
+
\big(w_t(0)-w_1,\varphi_s\big)_{\Omega_s}
=
0
\label{iid}
\end{align}
for every
$
(\varphi_f,\varphi_s)\in\mathcal V.
$

\noindent To identify the fluid initial value, take
$
\varphi_f\in C_c^\infty(\Omega_f)
$
and
$
\varphi_s=0.
$
Then the trace of $\varphi_f$ on $\Gamma_s$ is zero, and hence
$
(\varphi_f,0)\in\mathcal V.
$
Therefore \eqref{iid} gives
\begin{align}
\big(u(0)-u_0,\varphi_f\big)_{\Omega_f}=0
\qquad
\forall \varphi_f\in C_c^\infty(\Omega_f).
\label{lk3s}
\end{align}
Since $C_c^\infty(\Omega_f)$ is dense in $L^2(\Omega_f)$, it follows from \eqref{lk3s} that
\begin{align}
u(0)=u_0
\qquad
\text{in }L^2(\Omega_f).
\label{ui}
\end{align}
Similarly, take
$
\varphi_f=0
$
and
$
\varphi_s\in C_c^\infty(\Omega_s).
$
Then the trace of $\varphi_s$ on $\Gamma_s$ is zero, and hence
$
(0,\varphi_s)\in\mathcal V.
$
Thus \eqref{iid} gives
\begin{align}
\big(w_t(0)-w_1,\varphi_s\big)_{\Omega_s}=0
\qquad
\forall \varphi_s\in C_c^\infty(\Omega_s).
\label{b12z}
\end{align}
Since $C_c^\infty(\Omega_s)$ is dense in $L^2(\Omega_s)$, it follows from \eqref{b12z} that
\begin{align}
w_t(0)=w_1
\qquad
\text{in }L^2(\Omega_s).
\label{wti}
\end{align}

\noindent It remains to identify $w(0)$. From \eqref{2.87EQW}, taking
$t=0$, we have
\begin{align}
w(0)
=
w_0
+
\int_0^0 w_t(s)\,ds
=
w_0
\qquad
\text{in }L^2(\Omega_s).
\label{wl2}
\end{align}
We now upgrade this equality to $H^1(\Omega_s)$. By \eqref{wwh1},
$
w(0)\in H^1(\Omega_s),
$
and by the assumption
$
X_0\in\mathbb H
$
we have
$
w_0\in H^1(\Omega_s).
$
Define
$
\zeta:=w(0)-w_0
$, 
then
$
\zeta\in H^1(\Omega_s).
$
On the other hand, \eqref{wl2} gives
\begin{align}
\zeta=0
\qquad
\text{in }L^2(\Omega_s).
\label{2.107EO}
\end{align}
Since
$
\zeta\in H^1(\Omega_s),
$
for every
$
\phi\in C_c^\infty(\Omega_s)
$
and every
$
i=1,2,
$
the definition of weak derivative gives
\begin{align}
\int_{\Omega_s}
\zeta\,\partial_i\phi\,dx
=
-
\int_{\Omega_s}
\partial_i\zeta\,\phi\,dx.
\label{TF3}
\end{align}
Since, by \eqref{2.107EO} $\zeta=0\,\text{in }L^2(\Omega_s)
$, the left-hand side of \eqref{TF3} vanishes.
Therefore, from \eqref{TF3}, we obtain
\begin{align}
\int_{\Omega_s}
\partial_i\zeta\,\phi\,dx
=
0
\qquad
\forall \phi\in C_c^\infty(\Omega_s),
\end{align}
which implies
\begin{align}
\partial_i\zeta=0
\qquad
\text{in }L^2(\Omega_s),
\qquad
i=1, 2.
\end{align}
and therefore
\begin{align}
\nabla\zeta=0
\qquad
\text{in }L^2(\Omega_s).
\label{2.107EL}
\end{align}
Hence \eqref{2.107EO} and \eqref{2.107EL} together implies that \begin{align}\label{2.103EQL}
w(0)=w_0 \qquad \textrm{in}\; H^1(\Omega_s).
\end{align}
Combining \eqref{ui}, \eqref{wti}, and \eqref{2.103EQL}, we conclude that
\begin{align}
X(0)=X_0
\qquad
\text{in }\mathbb H.
\label{2.1033E}
\end{align}
\noindent
\underline{\textbf{Strong continuity at the initial time:}} From \eqref{xwc} and \eqref{2.1033E}, we have
\begin{align}
X(t)\rightharpoonup X_0
\qquad
\text{weakly in }\mathbb H
\qquad
\text{as }t\to0^+.
\label{weak-at-zero}
\end{align}
It follows from \eqref{4GHKL}, for a.e. $t\in(0,T)$ that 
\begin{align}
\frac12\lVert X(t)\rVert_{\mathbb H}^2
+
\int_0^t
\lVert \nabla u(\tau)\rVert_{L^2(\Omega_f)}^2\,d\tau
\le
\frac12\lVert X_0\rVert_{\mathbb H}^2.
\label{energy-ae}
\end{align}
We first extend the estimate to every $t\in[0,T]$. Fix $t\in[0,T]$. Since \eqref{energy-ae} holds for a.e. $t\in(0,T)$, we may choose a sequence
$
\{t_n\}\subset(0,T)
$
such that
$
t_n\to t
$
and \eqref{energy-ae} holds at each $t_n$. Hence, from \eqref{energy-ae}, we have
\begin{align}
\frac12\lVert X(t_n)\rVert_{\mathbb H}^2
+
\int_0^{t_n}
\lVert \nabla u(\tau)\rVert_{L^2(\Omega_f)}^2\,d\tau
\le
\frac12\lVert X_0\rVert_{\mathbb H}^2.
\label{er4}
\end{align}
Since $X\in C_w([0,T];\mathbb H)$ by \eqref{xwc}, we have
$
X(t_n)\rightharpoonup X(t)
\;
\text{weakly in }\mathbb H.
$
Therefore, by weak lower semicontinuity,
\begin{align}
\frac12\lVert X(t)\rVert_{\mathbb H}^2
\le
\liminf_{n\to\infty}
\frac12\lVert X(t_n)\rVert_{\mathbb H}^2.
\label{2.110SD}
\end{align}
In addition, since
$
\nabla u\in L^2(0,T;L^2(\Omega_f)),
$
we have
$
\lVert \nabla u(\tau)\rVert_{L^2(\Omega_f)}^2\in L^1(0,T).
$
Therefore, the map
$
r\mapsto
\int_0^r
\lVert \nabla u(\tau)\rVert_{L^2(\Omega_f)}^2\,d\tau
$
is continuous on $[0,T]$, and hence
\begin{align}
\int_0^{t_n}
\lVert \nabla u(\tau)\rVert_{L^2(\Omega_f)}^2\,d\tau
\to
\int_0^t
\lVert \nabla u(\tau)\rVert_{L^2(\Omega_f)}^2\,d\tau.
\label{2.111SD}
\end{align}
 Applying $\liminf_{n\to\infty}$ to \eqref{er4} and 
using \eqref{2.110SD} and \eqref{2.111SD}, we obtain
\begin{align}
\frac12\lVert X(t)\rVert_{\mathbb H}^2
+
\int_0^t
\lVert \nabla u(\tau)\rVert_{L^2(\Omega_f)}^2\,d\tau
\le
\frac12\lVert X_0\rVert_{\mathbb H}^2.
\label{2.106EL}
\end{align}
Since $t\in[0,T]$ was arbitrary, \eqref{2.106EL} holds for every
$t\in[0,T]$. In particular, \eqref{2.106EL} implies
\begin{align}
\lVert X(t)\rVert_{\mathbb H}^2
\le
\lVert X_0\rVert_{\mathbb H}^2
\qquad
\text{for every }t\in[0,T],
\end{align}
 and hence
\begin{align}
\limsup_{t\to0^+}
\lVert X(t)\rVert_{\mathbb H}^2
\le
\lVert X_0\rVert_{\mathbb H}^2.
\label{lie}
\end{align}
On the other hand, from \eqref{weak-at-zero} and weak lower semicontinuity,
\begin{align}
\lVert X_0\rVert_{\mathbb H}^2
\le
\liminf_{t\to0^+}
\lVert X(t)\rVert_{\mathbb H}^2.
\label{limfe}
\end{align}
Combining \eqref{lie} and \eqref{limfe}, we get
\begin{align}
\lim_{t\to0^+}
\lVert X(t)\rVert_{\mathbb H}
=
\lVert X_0\rVert_{\mathbb H}.
\label{nc}
\end{align}
Since
$
X(t)\rightharpoonup X_0
\;
\text{weakly in }\mathbb H
\;
\text{as }t\to0^+
$
by \eqref{weak-at-zero}, and since the norms converge by \eqref{nc}, we obtain
$$
\lVert X(t)-X_0\rVert_{\mathbb H}^2
=
\lVert X(t)\rVert_{\mathbb H}^2
+
\lVert X_0\rVert_{\mathbb H}^2
-
2(X(t),X_0)_{\mathbb H},
$$
which implies
\begin{align}
X(t)\to X_0
\qquad
\text{strongly in }\mathbb H
\qquad
\text{as }t\to0^+.
\label{siH}
\end{align}
\noindent
\underline{\textbf{Uniqueness of the Galerkin-limit solution:}} 
Let
$
X_N^{(i)}(t)
=
\bigl(u_N^{(i)}(t),w_N^{(i)}(t),w_{N,t}^{(i)}(t)\bigr),
\; i=1,2,
$
be two Galerkin approximations in the same space $\mathcal V_N$, with the
same initial data. 
Define
$$
\widehat u_N:=u_N^{(1)}-u_N^{(2)},
\qquad
\widehat w_N:=w_N^{(1)}-w_N^{(2)},
\qquad
\widehat z_N:=w_{N,t}^{(1)}-w_{N,t}^{(2)}.
$$
Define $\widehat{X}_N:=(\widehat{u}_N, \widehat{w}_N, \widehat{z}_N)$ and
$
\widehat X:=X^{(1)}-X^{(2)}
=
(\widehat u,\widehat w,\widehat z);
$
then, by construction,
$
\widehat z_N=\partial_t\widehat w_N
$
and
$
\widehat X_N(0)=0.
$
Subtracting the two Galerkin systems gives, for every
$(\varphi_f,\varphi_s)\in\mathcal V_N$,
\begin{align}
(\partial_t\widehat u_N,\varphi_f)_{\Omega_f}
+
(\nabla\widehat u_N,\nabla\varphi_f)_{\Omega_f}
+
(\partial_t\widehat z_N,\varphi_s)_{\Omega_s}
+
(\nabla\widehat w_N,\nabla\varphi_s)_{\Omega_s}
+
(\widehat w_N,\varphi_s)_{\Omega_s}
=
0.
\label{2.112AQ}
\end{align}
Since
$
(\widehat u_N(t),\widehat z_N(t))\in\mathcal V_N$, we may take
$
(\varphi_f,\varphi_s)=(\widehat u_N,\widehat z_N)
$
in \eqref{2.112AQ}. Hence
\begin{align}
\frac12\frac{d}{dt}
\left(
\lVert \widehat u_N\rVert_{L^2(\Omega_f)}^2
+
\lVert \widehat z_N\rVert_{L^2(\Omega_s)}^2
+
\lvert \nabla\widehat w_N\rVert_{L^2(\Omega_s)}^2
+
\lVert \widehat w_N\rVert_{L^2(\Omega_s)}^2
\right)
+
\lVert \nabla\widehat u_N\rVert_{L^2(\Omega_f)}^2
=
0.
\label{2.114GE}
\end{align}
Integrating over $(0,t)$, using $\widehat X_N(0)=0$, gives
\begin{align}
\frac12\lVert \widehat X_N(t)\rVert_{\mathbb H}^2
+
\int_0^t
\lvert \nabla\widehat u_N(\tau)\rVert_{L^2(\Omega_f)}^2\,d\tau
=
0.
\label{re212}
\end{align}
Since both terms on the left-hand side are nonnegative, it follows that
\begin{align}
\widehat {X}_{N}(t)=0
\qquad
\text{in }\mathbb H
\qquad
\forall t\in[0,T].
\label{we23v}
\end{align}
Therefore,
$$
X_N^{(1)}(t)=X_N^{(2)}(t)
\qquad
\forall t\in[0,T].
$$
Thus, for each fixed $N$, the Galerkin approximation is unique.
\end{proof}

\subsection{A priori estimates and the resulting $H^3$-regularity}

\noindent
We first derive the tangential and time-differentiated a priori estimates in Proposition \ref{Prop 2.2} which form the foundation for the higher regularity analysis. The estimates obtained below provide uniform control of tangential spatial derivatives together with successive time derivatives of the coupled heat-wave system \eqref{1.0}. These bounds will later be combined with the equations themselves to recover the missing normal derivatives from the already controlled tangential and time-differentiated quantities, yielding higher regularity up to the $H^3$-level in Theorem \ref{Th 2.3}.
\begin{proposition}
\label{Prop 2.2}
Let $s\ge 3$. Assume that
\begin{align}
D^\alpha u_0&\in L^2(\Omega_f),
\qquad 0\le\alpha\le s,
\\
D^\alpha w_0&\in H^1(\Omega_s),
\qquad 0\le\alpha\le s,
\\
D^\alpha w_1&\in L^2(\Omega_s),
\qquad 0\le\alpha\le s,
\end{align}
 first-order time derivatives of the initial data satisfy
\begin{align}
D^\alpha u_t(0)&\in L^2(\Omega_f),
\qquad 0\le\alpha\le s-1,
\\
D^\alpha w_t(0)&\in H^1(\Omega_s),
\qquad 0\le\alpha\le s-1,
\\
D^\alpha w_{tt}(0)&\in L^2(\Omega_s),
\qquad 0\le\alpha\le s-1,
\end{align}
and the second-order time derivatives of the  initial data satisfy
\begin{align}
D^\alpha u_{tt}(0)&\in L^2(\Omega_f),
\qquad 0\le\alpha\le s-2,
\\
D^\alpha w_{tt}(0)&\in H^1(\Omega_s),
\qquad 0\le\alpha\le s-2,
\\
D^\alpha w_{ttt}(0)&\in L^2(\Omega_s),
\qquad 0\le\alpha\le s-2.
\end{align}
Then the corresponding solution $(w, w_t, u)$ to \eqref{1.0} satisfies the following estimates:
\begin{align}
\sup_{t\in[0,T]}
\sum_{\alpha=0}^{s}
\Big(
&\lVert D^\alpha u(t)\rVert_{L^2(\Omega_f)}^2
+
\lVert D^\alpha w_t(t)\rVert_{L^2(\Omega_s)}^2
+
\lVert D^\alpha w(t)\rVert_{H^1(\Omega_s)}^2
\Big)
\nonumber\\
&\qquad
+
\int_0^T
\sum_{\alpha=0}^{s}
\lVert \nabla D^\alpha u(t)\rVert_{L^2(\Omega_f)}^2\,dt
\le C,
\label{BHs}
\end{align}
\begin{align}
\sup_{t\in[0,T]}
\sum_{\alpha=0}^{s-1}
\Big(
&\lVert D^\alpha u_t(t)\rVert_{L^2(\Omega_f)}^2
+
\lVert D^\alpha w_{tt}(t)\rVert_{L^2(\Omega_s)}^2
+
\lVert D^\alpha w_t(t)\rVert_{H^1(\Omega_s)}^2
\Big)
\nonumber\\
&\qquad
+
\int_0^T
\sum_{\alpha=0}^{s-1}
\lVert \nabla D^\alpha u_t(t)\rVert_{L^2(\Omega_f)}^2\,dt
\le C,
\label{LemmaA}
\end{align}
and
\begin{align}
\sup_{t\in[0,T]}
\sum_{\alpha=0}^{s-2}
\Big(
&\lVert D^\alpha w_{ttt}(t)\rVert_{L^2(\Omega_s)}^2
+
\lVert D^\alpha w_{tt}(t)\rVert_{H^1(\Omega_s)}^2
+
\lVert  D^\alpha u_{tt}(t)\rVert_{L^2(\Omega_f)}^2
\Big)
\nonumber\\
&\qquad
+
\int_0^T
\sum_{\alpha=0}^{s-2}
\lVert \nabla D^\alpha u_{tt}(t)\rVert_{L^2(\Omega_f)}^2\,dt
\le C.
\label{E2.130s}
\end{align}
\end{proposition}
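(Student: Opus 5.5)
The plan is to use the fact, recorded in the Remark on the flat geometry, that both the tangential operator $D^\alpha=\partial_{x_1}^\alpha$ and the time derivative $\partial_t$ commute with the Laplacian, with $\partial_\nu=\pm\partial_{x_2}$, and with the boundary and coupling conditions \eqref{1.0c}--\eqref{1.0f}. Hence, for every $\alpha\le s$, the pair $(D^\alpha u, D^\alpha w)$ again solves \eqref{1.0}, with initial data $(D^\alpha u_0, D^\alpha w_0, D^\alpha w_1)$. The same holds for $(D^\alpha u_t, D^\alpha w_t)$ with $\alpha\le s-1$, whose data are $(D^\alpha u_t(0), D^\alpha w_t(0), D^\alpha w_{tt}(0))$, and for $(D^\alpha u_{tt}, D^\alpha w_{tt})$ with $\alpha\le s-2$. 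In each case the hypotheses of Proposition~\ref{Prop 2.2} say exactly that the corresponding data lie in $\mathbb H$.

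To make this rigorous I will work at the Galerkin level. First I replace the basis of $\mathcal V$ by one adapted to the $x_1$-periodicity: products $e^{ik x_1}\theta(x_2)$ with $x_2$-profiles compatible with $\mathcal V$, so that each $\mathcal V_N$ is invariant under $\partial_{x_1}$. Then $D^\alpha u^N, D^\alpha w^N$ satisfy \eqref{Gw}, and testing with $(D^\alpha u^N, D^\alpha w_t^N)\in\mathcal V_N$ gives the energy identity \eqref{eqEB} for each $\alpha$. The interface terms cancel exactly as before because $D^\alpha u^N=D^\alpha w_t^N$ on $\Gamma_s$. Summing over $0\le\alpha\le s$ yields \eqref{BHs} for the approximations with a constant depending only on the data norms. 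Lower semicontinuity and the uniqueness part of Theorem~\ref{Th 2.1} then pass the bound to the weak solution.

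For \eqref{LemmaA} and \eqref{E2.130s} I would differentiate the Galerkin ODE system \eqref{Galerkin-ODE1}--\eqref{Galerkin-ODE2} in time once and twice. This is legitimate because the system is linear with constant coefficients, so $g_k,h_k$ are smooth. Testing the once-differentiated system with $(D^\alpha u_t^N, D^\alpha w_{tt}^N)$ gives
\[
\frac{d}{dt}\frac12\Big(\lVert D^\alpha u_t^N\rVert^2+\lVert D^\alpha w_{tt}^N\rVert^2+\lVert D^\alpha w_t^N\rVert_{H^1}^2\Big)+\lVert\nabla D^\alpha u_t^N\rVert^2=0 .
\]
The twice-differentiated system is handled the same way.

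The main obstacle is the initial data for the time-differentiated problems. The Galerkin values $u_t^N(0), w_{tt}^N(0)$ are determined by the ODE from $u_0^N, w_0^N, w_1^N$, so they must be shown to be bounded, and to converge, in the norms assumed for $u_t(0), w_{tt}(0)$. I would handle this by choosing the approximate data as projections compatible with $A$, for instance through a basis built from the operator generated by the bilinear form in \eqref{1.33H}. Then $\Delta$-type projections commute with the projection, and $(u_t^N(0),w_t^N(0),w_{tt}^N(0))$ are the projections of the prescribed quantities $u_t(0), w_t(0), w_{tt}(0)$, where $u_t(0):=\Delta u_0$ and $w_{tt}(0):=\Delta w_0-w_0$. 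If that compatibility cannot be arranged, the fallback is to interpret the hypotheses as bounds on $X(0)$ in $D(A)$ and $D(A^2)$ in tangentially differentiated form. I would then obtain the three estimates from semigroup regularity, $Y\in C^{k-j}(D(A^j))$, applied to $D^\alpha X$, using the dissipativity identity $\mathrm{Re}(AX,X)=-\lVert\nabla u\rVert^2$, which gives exactly the stated bounds.
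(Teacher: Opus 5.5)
Your key identity is the same as the paper's. The paper applies $D^\alpha$, $D^\alpha\partial_t$, $D^\alpha\partial_t^2$ to \eqref{1.0}, tests with $D^\alpha u$ and $D^\alpha w_t$ (resp.\ their time derivatives), cancels the $\Gamma_s$ terms through the differentiated coupling conditions, integrates in time and sums over $\alpha$. It does this formally, on the PDE, and never justifies the testing at the level of the weak solution. It also treats $u_t(0)$, $w_{tt}(0)$, $u_{tt}(0)$, $w_{ttt}(0)$ as prescribed data, without stating the compatibility conditions under which they are the initial values of the solution's time derivatives. Your proposal differs in supplying that justification, and you correctly identify the data of the time-differentiated problems as the real issue. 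Of your two options, the semigroup one is the one that closes, and it gives all three estimates at once, so the Galerkin layer is unnecessary. The $x_1$-translations commute with $A$, so contractivity plus difference quotients give $D^\alpha e^{tA}X_0=e^{tA}D^\alpha X_0$. The identity $(AZ,Z)_{\mathbb H}=-\lVert\nabla z_1\rVert_{L^2(\Omega_f)}^2$, applied (after density) to $Z_0=D^\alpha X_0$, $D^\alpha AX_0$, $D^\alpha A^2X_0$, then yields \eqref{BHs}, \eqref{LemmaA}, \eqref{E2.130s} as exact energy identities. What your route buys is a precise meaning for the hypotheses, namely $D^\alpha X_0\in D(A)$ and $D^\alpha X_0\in D(A^2)$, and a rigorous identity; the paper's version is shorter but only formal.

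Three points need fixing.

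\emph{First}, drop the ``compatible projection'' Galerkin option; it is not substantiated. The stiffness in \eqref{1.33H} acts on $(u,w)$, while the Galerkin unknown in $\mathcal V_N$ is $(u,w_t)$. Moreover $(u_0,w_0)\notin\mathcal V$ in general, since on $\Gamma_s$ one has $u_0=w_1$, not $u_0=w_0$. Requiring $\dot g(0)$ to reproduce the projection of $(\Delta u_0,\Delta w_0-w_0)$ therefore amounts to $(\nabla(u_0^N-u_0),\nabla\Phi_i)_{\Omega_f}+(w_0^N-w_0,\Psi_i)_{H^1(\Omega_s)}=0$ for $i\le N$. This ties the coefficients of $(u_0^N,w_1^N)$ to those of $w_0^N$, and an eigenbasis of that form does not make it automatic.

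\emph{Second}, the semigroup argument needs $A$ to be m-dissipative, not merely dissipative, and this fails for the paper's $D(A)$ as written. Solving $(\lambda-A)X=(f_1,f_2,f_3)$ with $u,w\in H^2$ forces $f_2\rvert_{\Gamma_s}=\lambda w\rvert_{\Gamma_s}-u\rvert_{\Gamma_s}\in H^{3/2}(\Gamma_s)$, whereas $f_2\in V_s$ has only an $H^{1/2}$ trace. You must use the maximal (Avalos--Triggiani) realization, with $\Delta u\in L^2(\Omega_f)$, $\Delta w\in L^2(\Omega_s)$ and the normal coupling in $H^{-1/2}(\Gamma_s)$.

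\emph{Third}, the uniqueness in Theorem~\ref{Th 2.1} is only uniqueness of the Galerkin ODE for fixed $N$. It cannot transfer bounds from your adapted scheme to another solution, so state the estimates for the solution you actually construct.
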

\begin{proof}[Proof of Proposition \ref{Prop 2.2}]
Here $D^\alpha$ denotes tangential derivatives along the periodic direction.
Since the coefficients are constant and $\Gamma_s$ is flat,
$$
[D^\alpha,\Delta]=0,
\qquad
[D^\alpha,\partial_\nu]=0, \qquad \textrm{ and} \qquad 
(D^\alpha v)\rvert_{\Gamma_s}
=
D^\alpha(v\rvert_{\Gamma_s}).
$$
Consequently, applying $D^\alpha$ to the equations and coupling conditions
preserves their form.

\noindent For $0\le\alpha\le s$, applying $D^\alpha$ to \eqref{1.0a}-\eqref{1.0b} gives
\begin{align}
D^\alpha u_t-\Delta D^\alpha u&=0
\qquad
\text{in }\Omega_f,
\label{XY}
\\
D^\alpha w_{tt}-\Delta D^\alpha w+D^\alpha w&=0
\qquad
\text{in }\Omega_s.
\label{YZ}
\end{align}
The corresponding trace and normal derivative relations become
\begin{align}
D^\alpha u&=0
\qquad
\text{on }\Gamma_f,
\\
D^\alpha w&=0
\qquad
\text{on }\Gamma_{s,0},
\\
D^\alpha u&=D^\alpha w_t
\qquad
\text{on }\Gamma_s,
\label{eqtf}
\\
\partial_\nu D^\alpha u&=\partial_\nu D^\alpha w
\qquad
\text{on }\Gamma_s.
\label{tncft}
\end{align}
Testing \eqref{XY} by $D^\alpha u$ and
\eqref{YZ} by $D^\alpha w_t$, integrating by parts, and using \eqref{eqtf}-\eqref{tncft}, we obtain
\begin{align}
\frac12\frac{d}{dt}
\Big(
&\lVert D^\alpha u\rVert_{L^2(\Omega_f)}^2
+
\lVert D^\alpha w_t\rVert_{L^2(\Omega_s)}^2
+
\lVert D^\alpha w\rVert_{H^1(\Omega_s)}^2
\Big)
+
\lVert \nabla D^\alpha u\rVert_{L^2(\Omega_f)}^2
=0.
\label{BA}
\end{align}
Integrating \eqref{BA} over $(0,t)$ yields
\begin{align}
&\lVert D^\alpha u(t)\rVert_{L^2(\Omega_f)}^2
+
\lVert D^\alpha w_t(t)\rVert_{L^2(\Omega_s)}^2
+
\lVert D^\alpha w(t)\rVert_{H^1(\Omega_s)}^2
\nonumber\\
&\qquad
+
2\int_0^t
\lVert \nabla D^\alpha u(\tau)\rVert_{L^2(\Omega_f)}^2\,d\tau
\nonumber\\
&=
\lVert D^\alpha u_0\rVert_{L^2(\Omega_f)}^2
+
\lVert D^\alpha w_1\rVert_{L^2(\Omega_s)}^2
+
\lVert D^\alpha w_0\rVert_{H^1(\Omega_s)}^2.
\label{BH-int}
\end{align}
Summing \eqref{BH-int} over $0\le\alpha\le s$ gives \eqref{BHs}.
Next, differentiating \eqref{XY}-\eqref{YZ} once in time gives
\begin{align}
D^\alpha u_{tt}-\Delta D^\alpha u_t&=0
\qquad
\text{in }\Omega_f,
\label{ALP}
\\
D^\alpha w_{ttt}-\Delta D^\alpha w_t+D^\alpha w_t&=0
\qquad
\text{in }\Omega_s.
\label{QWE}
\end{align}
The corresponding trace and normal derivative relations become
\begin{align}
D^\alpha u_t&=0
\qquad
\text{on }\Gamma_f,
\\
D^\alpha w_t&=0
\qquad
\text{on }\Gamma_{s,0},
\\
D^\alpha u_t&=D^\alpha w_{tt}
\qquad
\text{on }\Gamma_s,
\label{TSD}
\\
\partial_\nu D^\alpha u_t&=\partial_\nu D^\alpha w_t
\qquad
\text{on }\Gamma_s.
\label{TNCD}
\end{align}
Testing \eqref{ALP} by $D^\alpha u_t$ and
\eqref{QWE} by $D^\alpha w_{tt}$, integrating by parts, and using \eqref{TSD}-\eqref{TNCD}, we obtain
\begin{align}
\frac12\frac{d}{dt}
\Big(
&\lVert D^\alpha u_t\rVert_{L^2(\Omega_f)}^2
+
\lVert D^\alpha w_{tt}\rVert_{L^2(\Omega_s)}^2
+
\lVert D^\alpha w_t\rVert_{H^1(\Omega_s)}^2
\Big)
+
\lVert \nabla D^\alpha u_t\rVert_{L^2(\Omega_f)}^2
=0.
\label{TAA}
\end{align}
Integrating \eqref{TAA} over $(0,t)$ gives
\begin{align}
&\lVert D^\alpha u_t(t)\rVert_{L^2(\Omega_f)}^2
+
\lVert D^\alpha w_{tt}(t)\rVert_{L^2(\Omega_s)}^2
+
\lVert D^\alpha w_t(t)\rVert_{H^1(\Omega_s)}^2
\nonumber\\
&\qquad
+
2\int_0^t
\lVert \nabla D^\alpha u_t(\tau)\rVert_{L^2(\Omega_f)}^2\,d\tau
\nonumber\\
&=
\lVert D^\alpha u_t(0)\rVert_{L^2(\Omega_f)}^2
+
\lVert D^\alpha w_{tt}(0)\rVert_{L^2(\Omega_s)}^2
+
\lVert D^\alpha w_t(0)\rVert_{H^1(\Omega_s)}^2.
\label{TYR}
\end{align}
Summing \eqref{TYR} over $0\le\alpha\le s-1$ gives \eqref{LemmaA}.

\noindent Finally, for $0\le\alpha\le s-2$, differentiating
\eqref{ALP} once more in time and
\eqref{QWE} once more in time gives
\begin{align}
D^\alpha u_{ttt}-\Delta D^\alpha u_{tt}&=0
\qquad
\text{in }\Omega_f,
\label{tncfd}
\\
D^\alpha w_{tttt}-\Delta D^\alpha w_{tt}+D^\alpha w_{tt}&=0
\qquad
\text{in }\Omega_s.
\label{tnrs}
\end{align}
The corresponding trace and normal derivative relations become
\begin{align}
D^\alpha u_{tt}&=0
\qquad
\text{on }\Gamma_f,
\\
D^\alpha w_{tt}&=0
\qquad
\text{on }\Gamma_{s,0},
\\
D^\alpha u_{tt}&=D^\alpha w_{ttt}
\qquad
\text{on }\Gamma_s,
\label{AWET}
\\
\partial_\nu D^\alpha u_{tt}&=\partial_\nu D^\alpha w_{tt}
\qquad
\text{on }\Gamma_s.
\label{tnctnmt}
\end{align}
Testing \eqref{tncfd} by $D^\alpha u_{tt}$ and
\eqref{tnrs} by $D^\alpha w_{ttt}$, integrating by parts, and using \eqref{AWET}-\eqref{tnctnmt}, we obtain
\begin{align}
\frac12\frac{d}{dt}
\Big(
&\lVert D^\alpha w_{ttt}\rVert_{L^2(\Omega_s)}^2
+
\lVert D^\alpha w_{tt}\rVert_{H^1(\Omega_s)}^2
+
\lVert D^\alpha u_{tt}\rVert_{L^2(\Omega_f)}^2
\Big)
+
\lVert \nabla D^\alpha u_{tt}\rVert_{L^2(\Omega_f)}^2
=0.
\label{2.153HA}
\end{align}
Integrating \eqref{2.153HA} over $(0,t)$ gives
\begin{align}
&\lVert D^\alpha w_{ttt}(t)\rVert_{L^2(\Omega_s)}^2
+
\lVert D^\alpha w_{tt}(t)\rVert_{H^1(\Omega_s)}^2
+
\lVert D^\alpha u_{tt}(t)\rVert_{L^2(\Omega_f)}^2
\nonumber\\
&\qquad
+
2\int_0^t
\lVert \nabla D^\alpha u_{tt}(\tau)\rVert_{L^2(\Omega_f)}^2\,d\tau
\nonumber\\
&=
\lVert D^\alpha w_{ttt}(0)\rVert_{L^2(\Omega_s)}^2
+
\lVert D^\alpha w_{tt}(0)\rVert_{H^1(\Omega_s)}^2
+
\lVert D^\alpha u_{tt}(0)\rVert_{L^2(\Omega_f)}^2.
\label{2.154HI}
\end{align}
Summing \eqref{2.154HI} over $0\le\alpha\le s-2$ gives \eqref{E2.130s}. The proof is complete.
\end{proof}
\begin{theorem} \label{Th 2.3}
Assume Proposition \ref{Prop 2.2} holds. Then the estimates
\eqref{BHs}, \eqref{LemmaA}, and \eqref{E2.130s}
imply
\begin{align}
w&\in L^\infty(0,T;H^3(\Omega_s)),
\\
w_t&\in L^\infty(0,T;H^2(\Omega_s)),
\\
u&\in 
L^2(0,T;H^3(\Omega_f)).
\end{align}
\end{theorem}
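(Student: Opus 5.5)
The plan is to recover every mixed derivative $\partial_{x_1}^{a}\partial_{x_2}^{b}$ with $a+b\le 3$ (or $\le 2$ for $w_t$) from the tangential and time-differentiated bounds of Proposition \ref{Prop 2.2}. Derivatives with at most one normal derivative will be read off directly. Derivatives with two or three normal derivatives will be traded, via the interior equations rewritten as in \eqref{nru}, for tangential derivatives and time derivatives that are already controlled. Since $s\ge 3$, every index range in \eqref{BHs}, \eqref{LemmaA}, \eqref{E2.130s} used below is admissible. The interior identities hold in $\mathcal D'$ of the respective domains, so each identity expresses the left-hand side as a sum of functions already known to lie in $L^2$.

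\emph{Structure variable.} The estimate \eqref{BHs} gives $D^\alpha w\in L^\infty(0,T;H^1(\Omega_s))$ for $\alpha\le s$. This controls all $\partial_{x_1}^{a}\partial_{x_2}^{b}w$ with $b\le 1$, $a+b\le 3$. For $b=2$ I use
\[
\partial_{x_1}^{a}\partial_{x_2}^{2}w=\partial_{x_1}^{a}w_{tt}-\partial_{x_1}^{a+2}w+\partial_{x_1}^{a}w,\qquad a\le 1 .
\]
Here $D^{a}w_{tt}\in L^\infty L^2$ by \eqref{LemmaA} and $D^{a+2}w\in L^\infty L^2$ by \eqref{BHs}. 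For $b=3$ I differentiate the wave equation once in $x_2$:
\[
\partial_{x_2}^{3}w=\partial_{x_2}w_{tt}-\partial_{x_1}^{2}\partial_{x_2}w+\partial_{x_2}w .
\]
The first term is bounded in $L^\infty L^2$ because $w_{tt}\in L^\infty(0,T;H^1(\Omega_s))$ by \eqref{E2.130s} with $\alpha=0$. The second is bounded because $D^2w\in L^\infty H^1$ by \eqref{BHs}. Together these give $w\in L^\infty(0,T;H^3(\Omega_s))$.

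For $w_t$, the bound $D^\alpha w_t\in L^\infty H^1$, $\alpha\le s-1$, from \eqref{LemmaA} handles $b\le 1$. The time-differentiated wave equation gives
\[
\partial_{x_2}^2w_t=w_{ttt}-\partial_{x_1}^2w_t+w_t ,
\]
where $w_{ttt}\in L^\infty L^2$ by \eqref{E2.130s}. Hence $w_t\in L^\infty(0,T;H^2(\Omega_s))$.

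\emph{Fluid variable.} The dissipation term in \eqref{BHs} gives $\nabla D^\alpha u\in L^2(0,T;L^2)$ for $\alpha\le s$, which covers $b\le1$ in $L^2(0,T;L^2)$. For $b=2$, the identity \eqref{nrg} gives
\[
\partial_{x_1}^{a}\partial_{x_2}^{2}u=\partial_{x_1}^{a}u_t-\partial_{x_1}^{a+2}u,\qquad a\le 1 ,
\]
whose right side lies in $L^\infty L^2$ by \eqref{LemmaA} and \eqref{BHs}. For $b=3$,
\[
\partial_{x_2}^3u=\partial_{x_2}u_t-\partial_{x_1}^2\partial_{x_2}u .
\]
Here $\partial_{x_2}u_t\in L^2(0,T;L^2)$ is supplied by the dissipation integral in \eqref{LemmaA} with $\alpha=0$, and $\partial_{x_1}^2\partial_{x_2}u\in L^2L^2$ by the dissipation integral in \eqref{BHs}. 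This is why $u$ is only obtained in $L^2(0,T;H^3)$ rather than $L^\infty$. Summing these bounds over $a+b\le3$ yields the three claimed memberships.

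The main obstacle is not the algebra but justification. One must check that the weak solution of Theorem \ref{Th 2.1} actually possesses the time derivatives and tangential derivatives appearing in Proposition \ref{Prop 2.2}, so that the identities above hold in $L^2$ rather than merely formally. I would handle this by carrying out the recovery at the level of smooth (or Galerkin/$D(A^k)$) approximations with data satisfying the hypotheses of Proposition \ref{Prop 2.2}. I would then pass to the limit using weak-$*$ lower semicontinuity of the uniform bounds. Tangential regularity could alternatively be confirmed through difference quotients in the periodic variable $x_1$, which commute with the flat-interface coupling conditions.
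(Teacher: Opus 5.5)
Your proposal is correct and follows the paper's proof essentially line by line. Tangential derivatives and derivatives with one normal derivative are read off from \eqref{BHs} and \eqref{LemmaA}, and $\partial_{x_2}^2$, $\partial_{x_2}^3$ derivatives are recovered from the rewritten heat and wave equations using $w_{tt}\in L^\infty(0,T;H^1(\Omega_s))$ and $w_{ttt}\in L^\infty(0,T;L^2(\Omega_s))$ from \eqref{E2.130s}, together with the dissipation integrals for $u$ and $u_t$; this is exactly why $u$ lands only in $L^2(0,T;H^3(\Omega_f))$.

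Your closing justification paragraph goes beyond the paper, which simply takes the estimates of Proposition~\ref{Prop 2.2} as hypotheses. If you pursue it, use smooth or $D(A^k)$ solutions rather than Galerkin approximations: the latter satisfy the equations only in projected form, so identities such as $\partial_{x_2}^2u^N=u^N_t-\partial_{x_1}^2u^N$ do not hold for them.
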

\begin{proof}
\noindent
First, we prove
$
w\in L^\infty(0,T;H^3(\Omega_s)).
$
It is enough to control all derivatives
$
\partial_{x_1}^{\alpha}\partial_{x_2}^b w
$
with
$
\alpha+b \le 3
$
in
$
L^\infty(0,T;L^2(\Omega_s)).
$ From \eqref{BHs}, we have
\begin{align}
\sup_{t\in[0,T]}
\sum_{\alpha=0}^{s}
\lVert D^\alpha w(t)\rVert_{H^1(\Omega_s)}^2
\le C.
\label{basic}
\end{align}
Since
$
D^\alpha=\partial_{x_1}^\alpha,
$
\eqref{basic} gives
\begin{align}
\sup_{t\in[0,T]}
\lVert \partial_{x_1}^{\alpha} w(t)\rVert_{L^2(\Omega_s)}
&\le C,
\qquad 0\le \alpha \le s+1,
\label{TLO}
\\
\sup_{t\in[0,T]}
\lVert \partial_{x_1}^{\alpha}\partial_{x_2} w(t)\rVert_{L^2(\Omega_s)}
&\le C,
\qquad 0\le \alpha\le s.
\label{onPL}
\end{align}
Using \eqref{1.0b} and
$
\Delta=\partial_{x_1}^2+\partial_{x_2}^2,
$
we obtain
\begin{align}
\partial_{x_2}^2 w
=
w_{tt}
-
\partial_{x_1}^2 w
+
w.
\label{sn}
\end{align}
Applying
$
\partial_{x_1}^{\alpha}
$
to \eqref{sn} gives
\begin{align}
\partial_{x_1}^{\alpha}\partial_{x_2}^2 w
=
\partial_{x_1}^{\alpha} w_{tt}
-
\partial_{x_1}^{\alpha+2}w
+
\partial_{x_1}^{\alpha} w.
\label{sn-aL}
\end{align}
Taking the $L^2(\Omega_s)$-norm in \eqref{sn-aL}, we get
\begin{align}
\lVert
\partial_{x_1}^{\alpha}\partial_{x_2}^2 w(t)
\rVert_{L^2(\Omega_s)}
&\le
\lVert
\partial_{x_1}^{\alpha} w_{tt}(t)
\rVert_{L^2(\Omega_s)}
+
\lVert
\partial_{x_1}^{\alpha+2}w(t)
\rVert_{L^2(\Omega_s)}
\nonumber\\
&\qquad
+
\lVert
\partial_{x_1}^{\alpha} w(t)
\rVert_{L^2(\Omega_s)}.
\label{sneLO}
\end{align}
For
$\alpha+2\le 3,
$
we have
$\alpha=0,1.$
Hence \eqref{LemmaA}, \eqref{TLO}, and \eqref{sneLO} imply
\begin{align}
\sup_{t\in[0,T]}
\lVert
\partial_{x_1}^{\alpha}\partial_{x_2}^2 w(t)
\rVert_{L^2(\Omega_s)}
\le C,
\qquad
\alpha=0,1.
\label{snclLP}
\end{align}
It remains to control
$
\partial_{x_2}^3 w.
$
Differentiating \eqref{sn} with respect to $x_2$, we obtain
\begin{align}
\partial_{x_2}^3 w
=
\partial_{x_2} w_{tt}
-
\partial_{x_1}^2\partial_{x_2} w
+
\partial_{x_2} w.
\label{thnl}
\end{align}
Taking the $L^2(\Omega_s)$-norm in \eqref{thnl} gives
\begin{align}
\lVert
\partial_{x_2}^3 w(t)
\rVert_{L^2(\Omega_s)}
&\le
\lVert
\partial_{x_2} w_{tt}(t)
\rVert_{L^2(\Omega_s)}
+
\lVert
\partial_{x_1}^2\partial_{x_2} w(t)
\rVert_{L^2(\Omega_s)}
\nonumber\\
&\qquad
+
\lVert
\partial_{x_2} w(t)
\rVert_{L^2(\Omega_s)}.
\label{thnle}
\end{align}
By \eqref{E2.130s} with $\alpha=0$, we have
\begin{align}
\sup_{t\in[0,T]}
\lVert
w_{tt}(t)
\rVert_{H^1(\Omega_s)}
\le C,
\label{H1}
\end{align}
which implies that
\begin{align}
\sup_{t\in[0,T]}
\lVert
\partial_{x_2} w_{tt}(t)
\rVert_{L^2(\Omega_s)}
\le C.
\label{normal}
\end{align}
In addition, from \eqref{onPL}, with $\alpha=2$ and $\alpha=0$, we obtain
\begin{align}
\sup_{t\in[0,T]}
\lVert
\partial_{x_1}^2\partial_{x_2} w(t)
\rVert_{L^2(\Omega_s)}
&\le C,
\label{ASL}
\\
\sup_{t\in[0,T]}
\lVert
\partial_{x_2} w(t)
\rVert_{L^2(\Omega_s)}
&\le C.
\label{BSL}
\end{align}
Using \eqref{ASL}, \eqref{BSL}, \eqref{thnle}, and \eqref{normal}, we obtain
\begin{align}
\sup_{t\in[0,T]}
\lVert
\partial_{x_2}^3 w(t)
\rVert_{L^2(\Omega_s)}
\le C.
\label{tncOL}
\end{align}
Combining
\eqref{TLO},
\eqref{onPL},
\eqref{snclLP},
and
\eqref{tncOL},
we obtain all third-order derivatives of $w$. Indeed, with $\alpha=3$,
\eqref{TLO} yields
$$
\partial_{x_1}^3 w\in L^\infty(0,T;L^2(\Omega_s)).
$$
From \eqref{onPL} with $\alpha=2$, we obtain
$$
\partial_{x_1}^2\partial_{x_2} w
\in
L^\infty(0,T;L^2(\Omega_s)).
$$
From \eqref{snclLP} with $\alpha=1$, it follows that
$$
\partial_{x_1}\partial_{x_2}^2 w
\in
L^\infty(0,T;L^2(\Omega_s)).
$$
From \eqref{tncOL}, we get
$$
\partial_{x_2}^3 w
\in
L^\infty(0,T;L^2(\Omega_s)).
$$
In addition, the lower-order derivatives required by the $H^3(\Omega_s)$-norm are
already controlled. Indeed, \eqref{TLO} with
$\alpha=0,1,2$
yields
$$
w,
\quad
\partial_{x_1}w,
\quad
\partial_{x_1}^2w
\in
L^\infty(0,T;L^2(\Omega_s)).
$$
Moreover, \eqref{onPL} with
$\alpha=0,1$
gives
$$
\partial_{x_2}w,
\quad
\partial_{x_1}\partial_{x_2}w
\in
L^\infty(0,T;L^2(\Omega_s)).
$$
Finally, \eqref{snclLP} with $\alpha=0$ gives
$$
\partial_{x_2}^2w
\in
L^\infty(0,T;L^2(\Omega_s)).
$$
Since every derivative
$
\partial_{x_1}^{\alpha}\partial_{x_2}^b w$
with
$
\alpha+b \le3
$
belongs to
$
L^\infty(0,T;L^2(\Omega_s))$,
we conclude that
\begin{align}
w
\in
L^\infty(0,T;H^3(\Omega_s)).
\label{finalOP}
\end{align}
 we next prove
$
w_t\in L^\infty(0,T;H^2(\Omega_s)).
$
It is enough to control all derivatives
$
\partial_{x_1}^{\alpha}\partial_{x_2}^b w_t
$
with
$
\alpha+b \le2
$
in
$
L^\infty(0,T;L^2(\Omega_s))
$.  From \eqref{BHs}, we have
\begin{align}
\sup_{t\in[0,T]}
\lVert
\partial_{x_1}^{\alpha} w_t(t)
\rVert_{L^2(\Omega_s)}
\le C,
\qquad
0\le \alpha\le s.
\label{tL}
\end{align}
From \eqref{LemmaA}, we also have
\begin{align}
\sup_{t\in[0,T]}
\lVert
\partial_{x_1}^{\alpha} w_t(t)
\rVert_{H^1(\Omega_s)}
\le C,
\qquad
0\le \alpha\le s-1,
\label{H1}
\end{align}
and which therefore yields,
\begin{align}
\sup_{t\in[0,T]}
\lVert
\partial_{x_1}^{\alpha}\partial_{x_2} w_t(t)
\rVert_{L^2(\Omega_s)}
\le C,
\qquad
0\le \alpha\le s-1.
\label{on}
\end{align}
It remains to control the pure second normal derivative
$
\partial_{x_2}^2 w_t
$.  Differentiating \eqref{1.0b} once in time gives
\begin{align}
w_{ttt}
-
\Delta w_t
+
w_t
=
0.
\label{en}
\end{align}
Using
$
\Delta=\partial_{x_1}^2+\partial_{x_2}^2,
$
we obtain
\begin{align}
\partial_{x_2}^2 w_t
=
w_{ttt}
-
\partial_{x_1}^2 w_t
+
w_t.
\label{snP}
\end{align}
Taking the $L^2(\Omega_s)$-norm in \eqref{snP}, we get
\begin{align}
\lVert
\partial_{x_2}^2 w_t(t)
\rVert_{L^2(\Omega_s)}
&\le
\lVert
w_{ttt}(t)
\rVert_{L^2(\Omega_s)}
+
\lVert
\partial_{x_1}^2 w_t(t)
\rVert_{L^2(\Omega_s)}
\nonumber\\
&\qquad
+
\lVert
w_t(t)
\rVert_{L^2(\Omega_s)}.
\label{sneW}
\end{align}
By \eqref{E2.130s} with $\alpha=0$, we have
\begin{align}
\sup_{t\in[0,T]}
\lVert
w_{ttt}(t)
\rVert_{L^2(\Omega_s)}
\le C.
\label{HL}
\end{align}
Moreover, \eqref{tL} with $\alpha=2$ and $\alpha=0$ yields
\begin{align}
\sup_{t\in[0,T]}
\lVert
\partial_{x_1}^2 w_t(t)
\rVert_{L^2(\Omega_s)}
\le C,
\label{WTA2}
\end{align}
and
\begin{align}
\sup_{t\in[0,T]}
\lVert
w_t(t)
\rVert_{L^2(\Omega_s)}
\le C.
\label{WTA0}
\end{align}
Substituting \eqref{HL}, \eqref{WTA2}, and \eqref{WTA0}
into \eqref{sneW}, we obtain
\begin{align}
\sup_{t\in[0,T]}
\lVert
\partial_{x_2}^2 w_t(t)
\rVert_{L^2(\Omega_s)}
\le C.
\label{snclU}
\end{align}
Combining
\eqref{WTA2},
\eqref{on},
and
\eqref{snclU},
we have controlled all second-order derivatives of $w_t$. Indeed, \eqref{WTA2} yields
$$
\partial_{x_1}^2 w_t
\in
L^\infty(0,T;L^2(\Omega_s)).
$$
From \eqref{on} with $\alpha=1$, we obtain
$$
\partial_{x_1}\partial_{x_2} w_t
\in
L^\infty(0,T;L^2(\Omega_s)).
$$
From \eqref{snclU}, we obtain
$$
\partial_{x_2}^2 w_t
\in
L^\infty(0,T;L^2(\Omega_s)).
$$
In addition, the lower-order derivatives required by the $H^2(\Omega_s)$-regularity
of $w_t$ are also controlled. Indeed, \eqref{tL} with
$\alpha=0,1$
yields
$$
w_t,
\quad
\partial_{x_1}w_t
\in
L^\infty(0,T;L^2(\Omega_s)).
$$
Moreover, \eqref{on} with $\alpha=0$ gives
$$
\partial_{x_2}w_t
\in
L^\infty(0,T;L^2(\Omega_s)).
$$
Since every derivative
$
\partial_{x_1}^{\alpha}\partial_{x_2}^b w_t
$
with
$
\alpha+b \le2
$
belongs to
$
L^\infty(0,T;L^2(\Omega_s)),
$
we conclude that
\begin{align}
w_t
\in
L^\infty(0,T;H^2(\Omega_s)).
\label{final}
\end{align}
\noindent
Finally, we prove
$
u\in L^2(0,T;H^3(\Omega_f)).
$
It is enough to control all derivatives
$
\partial_{x_1}^{\alpha}\partial_{x_2}^b u
$
with
$
\alpha+b \le3
$
in
$L^2(0,T;L^2(\Omega_f)).$
\noindent From \eqref{BHs}, we have
\begin{align}
\sup_{t\in[0,T]}
\lVert
\partial_{x_1}^{\alpha} u(t)
\rVert_{L^2(\Omega_f)}
\le C,
\qquad
0\le \alpha \le s.
\label{2.184ETan}
\end{align}
Since $T<\infty$, and $L^\infty(0,T;L^2(\Omega_f)) \hookrightarrow L^2(0,T;L^2(\Omega_f))$, \eqref{2.184ETan} implies
\begin{align}
\int_0^T
\lVert
\partial_{x_1}^{\alpha} u(t)
\rVert_{L^2(\Omega_f)}^2\,dt
\le C,
\qquad
0\le \alpha \le s.
\label{2.185ETan}
\end{align}
In addition, from \eqref{BHs}, we have
\begin{align}
\int_0^T
\lVert
\nabla\partial_{x_1}^\alpha u(t)
\rVert_{L^2(\Omega_f)}^2\,dt
\le C,
\qquad
0\le\alpha\le s,
\end{align}
and  which yields,
\begin{align}
\int_0^T
\lVert
\partial_{x_1}^{\alpha+1}u(t)
\rVert_{L^2(\Omega_f)}^2\,dt
&\le C,
\qquad
0\le \alpha \le s,
\\
\int_0^T
\lVert
\partial_{x_1}^{\alpha}\partial_{x_2}u(t)
\rVert_{L^2(\Omega_f)}^2\,dt
&\le C,
\qquad
0\le \alpha \le s.
\label{2.188ENc}
\end{align}
Using \eqref{1.0a} and
$
\Delta=\partial_{x_1}^2+\partial_{x_2}^2,
$
we obtain
\begin{align}
\partial_{x_2}^2u
=
u_t-\partial_{x_1}^2u.
\label{NU2.189}
\end{align}
Applying
$
\partial_{x_1}^{\alpha}
$
to \eqref{NU2.189} gives
\begin{align}
\partial_{x_1}^{\alpha}\partial_{x_2}^2u
=
\partial_{x_1}^{\alpha} u_t
-
\partial_{x_1}^{\alpha+2}u.
\label{2.190Wf}
\end{align}
Taking the $L^2(\Omega_f)$-norm in \eqref{2.190Wf}, we get
\begin{align}
\lVert
\partial_{x_1}^{\alpha}\partial_{x_2}^2u(t)
\rVert_{L^2(\Omega_f)}
&\le
\lVert
\partial_{x_1}^{\alpha} u_t(t)
\rVert_{L^2(\Omega_f)}
+
\lVert
\partial_{x_1}^{\alpha+2}u(t)
\rVert_{L^2(\Omega_f)}.
\label{2.191ty}
\end{align}
For
$\alpha+2\le3$, we have
$\alpha=0,1.$
By \eqref{LemmaA}, we obtain
\begin{align}
\sup_{t\in[0,T]}
\lVert
\partial_{x_1}^{\alpha} u_t(t)
\rVert_{L^2(\Omega_f)}
\le C,
\qquad
\alpha=0,1.
\label{2.192TBs}
\end{align}
Since $T<\infty$, and $L^\infty(0,T;L^2(\Omega_f)) \hookrightarrow L^2(0,T;L^2(\Omega_f))$,  \eqref{2.192TBs} implies
\begin{align}
\int_0^T
\lVert
\partial_{x_1}^{\alpha} u_t(t)
\rVert_{L^2(\Omega_f)}^2\,dt
\le C,
\qquad
\alpha=0,1.
\label{2.194Qw}
\end{align}
Since
$
\alpha=0,1,
$
define
$
\beta=\alpha+2,
$
which implies
$
\beta=2,3.
$
Therefore, estimate \eqref{2.185ETan} implies that
$$
\int_0^T
\lVert 
\partial_{x_1}^{\beta}u(t)
\rVert_{L^2(\Omega_f)}^2\,dt
\le C,
\qquad
\beta=2,3.
$$
Because $s\ge3$, we have
$
2\le \beta \le s
$, it follows that
\begin{align}
\int_0^T
\lVert 
\partial_{x_1}^{\alpha+2}u(t)
\rVert_{L^2(\Omega_f)}^2\,dt 
\le C,
\qquad
\alpha=0,1.
\label{2.193Lz}  
\end{align}
Substituting \eqref{2.194Qw} and
\eqref{2.193Lz} into \eqref{2.191ty}, we obtain
\begin{align}
\int_0^T
\lVert
\partial_{x_1}^{\alpha}\partial_{x_2}^2u(t)
\rVert_{L^2(\Omega_f)}^2\,dt
\le C,
\qquad
\alpha=0,1.
\label{2.195bx}
\end{align}
It remains to control
$
\partial_{x_2}^3u.
$
Differentiating \eqref{NU2.189} with respect to $x_2$, we obtain
\begin{align}
\partial_{x_2}^3u
=
\partial_{x_2}u_t
-
\partial_{x_1}^2\partial_{x_2}u.
\label{Eq2.196}
\end{align}
Taking the $L^2(\Omega_f)$-norm in \eqref{Eq2.196}, we get
\begin{align}
\lVert
\partial_{x_2}^3u(t)
\rVert_{L^2(\Omega_f)}
\le
\lVert
\partial_{x_2}u_t(t)
\rVert_{L^2(\Omega_f)}
+
\lVert
\partial_{x_1}^2\partial_{x_2}u(t)
\rVert_{L^2(\Omega_f)}.
\label{Eq2.197w}
\end{align}
By \eqref{LemmaA} with $\alpha= 0$, we have
\begin{align}
\int_0^T
\lVert
\nabla u_t(t)
\rVert_{L^2(\Omega_f)}^2\,dt
\le C,
\end{align}
and therefore implies,
\begin{align}
\int_0^T
\lVert
\partial_{x_2}u_t(t)
\rVert_{L^2(\Omega_f)}^2\,dt
\le C.
\label{2.199DBe}
\end{align}
Moreover, from \eqref{2.188ENc} with $\alpha=2$, we obtain
\begin{align}
\int_0^T
\lVert
\partial_{x_1}^2\partial_{x_2}u(t)
\rVert_{L^2(\Omega_f)}^2\,dt
\le C.
\label{2.200Be}
\end{align}
Substituting \eqref{2.199DBe} and \eqref{2.200Be}
into \eqref{Eq2.197w}, we obtain
\begin{align}
\int_0^T
\lVert
\partial_{x_2}^3u(t)
\rVert_{L^2(\Omega_f)}^2\,dt
\le C.
\label{YU45qz}
\end{align}
Combining
\eqref{2.185ETan},
\eqref{2.188ENc},
\eqref{2.195bx},
and
\eqref{YU45qz},
we have controlled all third-order derivatives of $u$ in
$L^2(0,T;L^2(\Omega_f))$. Indeed, \eqref{2.185ETan} with $\alpha=3$ yields
$$
\partial_{x_1}^3u
\in
L^2(0,T;L^2(\Omega_f)).
$$
From \eqref{2.188ENc} with $\alpha=2$, we obtain
$$
\partial_{x_1}^2\partial_{x_2}u
\in
L^2(0,T;L^2(\Omega_f)).
$$
From \eqref{2.195bx} with $\alpha=1$, we obtain
$$
\partial_{x_1}\partial_{x_2}^2u
\in
L^2(0,T;L^2(\Omega_f)).
$$
From \eqref{YU45qz}, we get
$$
\partial_{x_2}^3u
\in
L^2(0,T;L^2(\Omega_f)).
$$
In addition, the lower-order derivatives required by the $H^3(\Omega_f)$-norm are
already controlled. Indeed, \eqref{2.185ETan} with
$\alpha=0,1,2$
yields
$$
u,
\quad
\partial_{x_1}u,
\quad
\partial_{x_1}^2u
\in
L^2(0,T;L^2(\Omega_f)).
$$
Moreover, \eqref{2.188ENc} with
$\alpha=0,1$
gives
$$
\partial_{x_2}u,
\quad
\partial_{x_1}\partial_{x_2}u
\in
L^2(0,T;L^2(\Omega_f)).
$$
Finally, \eqref{2.195bx} with $\alpha=0$ gives
$$
\partial_{x_2}^2u
\in
L^2(0,T;L^2(\Omega_f)).
$$
Since every derivative
$
\partial_{x_1}^{\alpha}\partial_{x_2}^b u
$
with
$\alpha+b \le3$
belongs to
$L^2(0,T;L^2(\Omega_f)),$
we conclude that
\begin{align}
u
\in
L^2(0,T;H^3(\Omega_f)).
\label{2.202}
\end{align}
The proof is complete.
\end{proof}
\noindent In addition to the above proof, we record the following observation concerning the regularity of the fluid component.
\subsection*{Further Regularity Observation for $
u\in L^\infty(0,T;H^2(\Omega_f)):
$}
\noindent The preceding argument proves
$
u\in L^2(0,T;H^3(\Omega_f)).$
However, it does not prove
$
u\in L^\infty(0,T;H^2(\Omega_f)).$
The reason is that the full
$
H^2(\Omega_f)$
norm contains the mixed derivative
$
\partial_{x_1}\partial_{x_2}u,$
and the available estimate controls this term only in
$
L^2(0,T;L^2(\Omega_f)),$
not in
$
L^\infty(0,T;L^2(\Omega_f)).$
This point is clarified in the following remark.

\begin{remark}
The estimates \eqref{BHs}, \eqref{LemmaA}, and \eqref{E2.130s} do not yield
$
u\in L^\infty(0,T;H^2(\Omega_f)).$
Indeed, \eqref{BHs} implies
$
\partial_{x_1}\partial_{x_2}u
\in
L^2(0,T;L^2(\Omega_f)),$
through the dissipation term
$$
\int_0^T
\lVert
\nabla D^\alpha u(t)
\rVert_{L^2(\Omega_f)}^2\,dt,
$$
but the available estimates do not provide
$
\partial_{x_1}\partial_{x_2}u
\in
L^\infty(0,T;L^2(\Omega_f)).$
Although \eqref{1.0a}:
$
u_t-\Delta u=0$
allows recovery of the pure normal derivative
$
\partial_{x_2}^2u$
through
$\partial_{x_2}^2u
=
u_t-\partial_{x_1}^2u,$
the mixed derivative
$
\partial_{x_1}\partial_{x_2}u$
cannot be recovered in
$L^\infty(0,T;L^2(\Omega_f)).$
Consequently, the preceding estimates close only the bound
$u\in L^2(0,T;H^3(\Omega_f)),$
while estimate
$u\in L^\infty(0,T;H^2(\Omega_f))$
cannot be deduced without additional regularity.
\end{remark}
\noindent \textbf{Declaration of competing interest}
\medskip

\noindent The author has no conflicts to disclose.

\medskip

\noindent \textbf{Author contributions}
\medskip

\noindent M. M. R. is the sole author of this manuscript and formulated the research idea, developed the mathematical theory, carried out the analysis, proved the main results, wrote the manuscript, and approved the final version.
\medskip

\noindent \textbf{Data availability}
\medskip

\noindent The data-sharing policy does not apply to this manuscript since we did not analyze any data in this study.
\bibliographystyle{plain}
\bibliography{reference}
\end{document}